\theoremstyle{plain}
\newtheorem{theorem}{Theorem}[section]
\newtheorem{lemma}[theorem]{Lemma}
\newtheorem{corollary}[theorem]{Corollary}
\newcommand{\dz}{dz_1,\ldots, dz_n}
\newcommand{\lspan}{\mathop{\rm span}}
\def \essa {{{\mathscr E}_A}}
\def\wX{\widehat X}
\begin{document}

\title[Analytic Discs]{Analytic Discs and Uniform Algebras Generated by Real-Analytic Functions}
\author{Alexander J. Izzo}
\address{Department of Mathematics and Statistics, Bowling Green State University, Bowling Green, OH 43403}
\email{aizzo@bgsu.edu}

\subjclass[2000]{Primary 46J10, 46J15, 32E20 30H50, 32A65}

\begin{abstract}
Under very general conditions it is shown$\vphantom{\widehat{\widehat{\widehat{\widehat{\widehat{\widehat{\wX}}}}}}}$ that if $A$ is a uniform algebra generated by real-analytic functions, then either $A$ consists of all continuous functions or else there exists a disc on which every function in $A$ is holomorphic.  This strengthens several earlier results concerning uniform algebras generated by real-analytic functions.

\end{abstract}
\maketitle

\vskip -1.73 true in
\centerline{\footnotesize\it Dedicated to John Wermer on the occasion of his 90th birthday} 
\vskip 1.73 truein

\section{Introduction} \label{intro}

Let
$X$ be a compact Hausdorff space, and let $C(X)$ be the algebra of all continuous complex-valued functions on $X$ with the supremum norm
$ \|f\|_{X} = \sup\{ |f(x)| : x \in X \}$.  A \emph{uniform algebra} $A$ on $X$ is a closed subalgebra of $C(X)$ that contains the constant functions and separates
the points of $X$.  There is a general feeling that a uniform algebra $A$ on $X$ is either $C(X)$ or else there is analytic structure in the maximal ideal space of $A$ with respect to which the Gelfand transforms of the functions in $A$ are holomorphic.  Much of the theory of uniform algebras is motivated by this point of view.  It is well known though, that this feeling is not completely correct.  However, in this paper we will prove that such a dichotomy 
is literally true in the case of a uniform algebra generated by 
real-analytic functions on its maximal ideal space.  The precise statement of the result is as follows.

\begin{theorem} \label{main_theorem} Let $V$ be a real-analytic subvariety of an open set $\Omega \subset \mathbb{R}^n$, and let $K$ be a compact subset of $V$ such that $\partial K$ is a real-analytic subvariety of $V$.  Let $A$ be a uniform algebra on $K$ generated by a collection $\Phi$ of functions real-analytic on $K$.  Suppose also that the maximal ideal space of $A$ is $K$.  Then either $A = C(K)$ or else $K$ contains an analytic disc.
\end{theorem}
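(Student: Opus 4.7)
The plan is to work by contradiction: assume $A \neq C(K)$ and $K$ contains no analytic disc, then derive a contradiction. The strategy is to complexify the real-analytic data so that the problem is recast in $\mathbb{C}^n$, where theorems producing analytic structure in uniform algebras generated by holomorphic functions are available. First I would complexify: each $\phi \in \Phi$, being real-analytic on $K$, extends to a holomorphic function $\tilde\phi$ on some open neighborhood of $K$ in $\mathbb{C}^n$ (viewing $\mathbb{R}^n \subset \mathbb{C}^n$). Compactness of $K$ together with a reduction to a separable subcollection of generators, or a Baire-category argument, produces a common open neighborhood $U$ of $K$ in $\mathbb{C}^n$ on which all generators extend simultaneously. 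The variety $V$ has a complexification $V^c \subset U$ of complex dimension $\dim_{\mathbb{R}} V$, and $\partial K$ complexifies within $V^c$. Let $\tilde A$ denote the uniform algebra on the holomorphic hull of $K$ in $U$ generated by $\{\tilde\phi : \phi \in \Phi\}$; restriction to $K$ gives a surjection $\tilde A \to A$, and the hypothesis that the maximal ideal space of $A$ equals $K$ serves to bound how far the hull can spread.

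Next I would use Bishop's antisymmetric decomposition to reduce the problem to showing that every maximal antisymmetric set of $A$ is a singleton; equivalently, given an antisymmetric set $E$ with more than one point, I must find an analytic disc in $K$. At a generic $p \in E$ the variety $V$ is regular, so $V$ is locally a real-analytic submanifold of $\mathbb{R}^n$ of some dimension $d$; this submanifold is automatically totally real in $\mathbb{C}^n$. I would then appeal to a local analytic-structure theorem for uniform algebras generated by holomorphic functions on a totally real real-analytic submanifold: either the local algebra is all continuous functions, contradicting that $p$ lies in a nontrivial antisymmetric set, or the maximal ideal space of the local algebra contains an analytic disc. Under the global hypothesis $\widehat A = K$, such a disc is forced to lie in $K$.

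The main obstacle I anticipate is the treatment of singular strata: points of $V_{\mathrm{sing}}$, singular points of $\partial K$, and points at which $V$ does not look locally like a smooth manifold. I would handle these by Whitney-stratifying $V$ into real-analytic manifolds and inducting on the real dimension of the stratum, using the inductive hypothesis together with the fact that lower-dimensional strata are removable for the essential set to dispose of the singular pieces, while the argument above handles each top-dimensional smooth stratum. A secondary issue is to ensure that the analytic disc produced by the local argument actually lies in $K$ rather than merely in the maximal ideal space or in the complex hull; this is precisely where the hypothesis $\widehat A = K$ is essential.
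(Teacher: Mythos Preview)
Your outline has the right global shape—contradiction, stratify, induct on dimension—but the heart of the argument is missing, and one of your reductions is misleading.

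The central gap is the ``local analytic-structure theorem for uniform algebras generated by holomorphic functions on a totally real real-analytic submanifold'' that you invoke. No such dichotomy is available off the shelf; establishing it \emph{is} the content of the theorem. The paper's actual work at a regular point $p$ of $V$ is to study the \emph{exceptional set} $K_\Phi$ (the locus where $df_1\wedge\cdots\wedge df_m$ vanishes for all $m$-tuples from $\Phi$). Off $K_\Phi$ one applies an approximation theorem (Theorem~2.6) to push the essential set into $K_\Phi\cup V_{\rm sing}$. If $K_\Phi$ had nonempty interior, the paper embeds a neighborhood via suitable generators into $\mathbb{C}^n$ as a CR manifold with \emph{positive-dimensional} complex tangent space, invokes Baouendi--Treves to identify the local algebra with a polynomial hull, and then uses Bishop's theorem to produce an analytic disc (Lemma~2.4). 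This is the mechanism that manufactures the disc, and your proposal skips it entirely.

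Relatedly, the observation that $V\subset\mathbb{R}^n\subset\mathbb{C}^n$ is totally real is a red herring. The embedding that matters is the one given by the generators, $f=(f_1,\ldots,f_n):U\to\mathbb{C}^n$, and $f(U)$ is \emph{not} totally real in general; its complex tangent space has dimension $m-\dim_{\mathbb{C}}\operatorname{span}\{df_1,\ldots,df_n\}$, which is positive exactly on the exceptional set. That is where discs live, and it is invisible if you only look at $V$ sitting inside $\mathbb{R}^n$.

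Two smaller points. First, the reduction to a common complex neighborhood $U$ for all $\tilde\phi$ is not available: $\Phi$ may be uncountable, and even for countably many generators the neighborhoods can shrink to $K$. The paper avoids this by working with finitely many generators at a time (only $m$-tuples enter the definition of $K_\Phi$). Second, the paper does not use antisymmetric sets; it works with the essential set $\mathscr{E}_A$, which interacts better with the filtration because one can repeatedly restrict $A$ to closed sets containing $\mathscr{E}_A$ without losing the hypothesis on the maximal ideal space (Lemma~2.7). Your stratification-and-induction idea is in the right spirit, but the paper's specific $E$-filtration/$S$-filtration handles the awkward fact that $K_\Phi\cup V_{\rm sing}$ need not itself be a subvariety, which a generic Whitney-stratification argument does not address.
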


A few points in the statement of the theorem should be clarified.
Throughout the paper, given a real-analytic subvariety $V$ of an open set $\Omega \subset \mathbb{R}^n$, and given a subset $K$ of $V$, whenever we say 
\lq\lq a collection $\Phi$ of functions real-analytic on $K$,\rq\rq\ we mean that to each member $f$ of $\Phi$ there corresponds a neighborhood of $K$ in $\mathbb{R}^n$, that may depend on the function $f$, to which $f$ extends to be real-analytic.  To say that $K$ is the maximal ideal space of $A$ means that
every non-zero multiplicative linear functional on $A$ is given by point evaluation at some point of $K$.   Let $D$ denote the open unit disc in the complex plane. To say that $K$ contains an analytic disc means that
there is a continuous injective map $\sigma :D \to K$ such that $f \circ \sigma$ is holomorphic for every $f \in A$.

The above theorem strengthens earlier work of the author and others motivated by the so called peak point conjecture.  We recall some of the work related to this conjecture here.  For more complete discussions see \cite{anderson_izzo_varieties} and \cite{izzo_survey}.  Consider the following two conditions on a uniform algebra $A$ on a compact metric space $K$:
\begin{description}
\item [{\rm(i)}]  the maximal ideal space of $A$ is $K$,
\item [{\rm(ii)}]  each point of $K$ is a peak point for $A$, i.e., given $x \in K$ there exists $f \in A$ with $f(x) = 1$ and $|f(y)| < 1$ for all $y \in K \setminus \{x\}$.                     \end{description}
Both (i) and (ii) are necessary conditions for $A = C(K)$.  It was once conjectured that (i) and (ii) together were also sufficient to conclude that $A = C(K)$.  A counterexample to this peak point conjecture was constructed by Brian Cole in 1968  \cite{Co} (or see \cite[Appendix]{browder}, or \cite[Section~19]{stout_book}).
Several other counterexamples have been found since then. 
Nevertheless, the peak point conjecture has been shown to be true in a number of special cases.
Anderson and the present author \cite{anderson_izzo_two_manifolds} proved that if $K$ is a compact differentiable two-dimensional manifold-with-boundary, and $A$ is a uniform algebra generated by $C^1$-smooth functions that satisfies (i) and (ii), then $A = C(K)$.  An example of Basener \cite{basener} on the three-sphere shows that the corresponding statement for three-manifolds is false.
However, Anderson, Wermer, and the present author \cite{anderson_izzo_smooth_manifolds}, \cite{anderson_izzo_varieties}, \cite{anderson_izzo_wermer_three_dimensions}, \cite{anderson_izzo_wermer_varieties} established peak point theorems for uniform algebras generated by real-analytic functions on real-analytic varieties.  The latest result along those lines is the following.

\begin{theorem}[\cite{anderson_izzo_varieties}, Theorem~1.1]  Let $V$ be a real-analytic subvariety of an open set $\Omega \subset \mathbb{R}^n$, and let $K$ be a compact subset of $V$ such that $\partial K$ is a real-analytic subvariety of $V$.  Let $A$ be a uniform algebra on $K$ generated by a collection $\Phi$ of functions real-analytic on $K$.  Assume that $A$ satisfies conditions (i) and (ii) above.  Then $A = C(K)$.
\end{theorem}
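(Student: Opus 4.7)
The plan is to prove $A = C(K)$ by a local-to-global reduction followed by an induction on $\dim V$, with the heart of the argument being that the peak-point hypothesis (ii) rules out complex tangents on the top-dimensional smooth stratum of $V$, after which a Hörmander--Wermer type approximation theorem finishes the local problem.

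First, using (i), I would pass to a countable subfamily $f_1,f_2,\ldots\in\Phi$ separating the points of $K$ and consider the real-analytic map $F=(f_1,f_2,\ldots):K\to\mathbb{C}^\infty$. The algebra $A$ is then identified with the uniform closure on $F(K)$ of polynomials in the coordinates, and $F$ embeds $K$ as a polynomially convex subset of $\mathbb{C}^\infty$ whose image lies in a real-analytic variety. By Bishop's antisymmetric decomposition, it suffices to show that each $p\in K$ has a compact neighborhood $N$ in $K$ with $A|_N=C(N)$: any maximal antisymmetric set larger than a point would have to live inside every such $N$.

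Second, I would induct on $d=\dim V$. The base case $d=0$ is trivial. For the inductive step, fix $p\in K$ and stratify $V$ near $p$ by smooth real-analytic manifolds $V=\bigsqcup_{k\le d} M_k$ with $\dim M_k=k$; each $\overline{M_k}\setminus M_k$ is a real-analytic subvariety of dimension $<k$, and together with the hypothesis on $\partial K$ this ensures that the lower strata, intersected with $K$, again satisfy the hypotheses of the theorem in lower dimension. If $p$ lies in a lower-dimensional stratum, the inductive hypothesis applied to that stratum gives the required local equality and one patches. Assume therefore $p$ lies in the interior of the top stratum $M_d$.

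Third, on $M_d$ the map $F$ is real-analytic, so $dF$ has locally constant rank on a Zariski-open subset, and the pullback of the complex structure of $\mathbb{C}^\infty$ via $dF_q$ determines the complex tangent subspace $H_q\subset T_qM_d$. The key step is to show $H_q=0$ at every such $q$ near $p$: otherwise a standard analytic-disc construction (a Levi-form / Baouendi--Treves argument, using that $F(M_d)$ is \emph{real-analytic}, hence locally a generic CR manifold where nontrivial complex tangents integrate to actual holomorphic discs) produces a nonconstant holomorphic map of the unit disc into $F(K)$ through $F(q)$, contradicting the peak-point condition (ii) at $q$. Real-analyticity then propagates $H\equiv 0$ from a dense set to all of $M_d$, so $F(M_d)$ is totally real.

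Once the top stratum is totally real, the Hörmander--Wermer approximation theorem yields that polynomials are dense in $C(\overline U)$ for a small closed neighborhood $\overline U\subset M_d\cap K$ of $p$, giving $A|_{\overline U}=C(\overline U)$; combined via Bishop's criterion with the inductive statement on the lower strata (and on $\partial K$), this produces the local equality demanded by the reduction. The main obstacle I expect is exactly the implication ``$H_q\neq 0\Rightarrow$ $q$ not a peak point'' in the third paragraph: one must convert infinitesimal complex-tangential information into an honest analytic disc in $F(K)$, and it is precisely the real-analyticity of the generators (rather than mere smoothness) that makes this possible, via a Hartogs-type holomorphic continuation of the local peaking function and the fact that $F(M_d)$ is a genuine real-analytic CR submanifold rather than a merely smooth one.
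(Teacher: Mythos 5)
Your proposal contains two genuine gaps, and both are exactly the points where the paper's machinery does real work. First, the local-to-global reduction via Bishop's antisymmetric decomposition does not go through: a maximal set of antisymmetry with more than one point need not lie inside any of your small neighborhoods $N$ (antisymmetry is a global property of a set, and such sets can be large), so knowing $\overline{A|_N}=C(N)$ for a neighborhood of each point gives no control over them. Whether a uniform algebra that is locally equal to the continuous functions must equal $C(X)$ is a well-known open problem; the companion result quoted in the introduction \cite{anderson_izzo_local} needs the countable approximation property precisely for this reason. The paper avoids localization altogether: it works with the essential set $\essa$ and uses the approximation theorem of \cite{izzo_approx_on_manifolds} (Theorem~2.8 here), whose conclusion $A=\{g\in C(X):g|E\in A|E\}$ is a \emph{global} statement saying $\essa\subset E$, which is then shrunk step by step. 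Your patching across strata has the same defect: the inductive hypothesis on a lower-dimensional stratum controls approximation on that stratum only, while a neighborhood of such a point in $K$ contains top-stratum points.

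Second, the peak-point hypothesis cannot force the top stratum to be totally real, and your propagation step ``$H\equiv 0$ on a dense set, hence everywhere by real-analyticity'' is false: $\dim_{\mathbb{C}}H_q$ is upper semicontinuous (it jumps up where the rank of $\lspan\{df_1,\ldots,df_m\}$ drops), so total reality on a dense open set says nothing at the degenerate points. Concretely, the graph of $|z|^2$ in $\mathbb{C}^2$ is totally real except at the origin, where it has a complex tangent, and suitable compacta on it satisfy (i) and (ii) with $A=C(K)$; so isolated (or thin) complex tangents are compatible with all the hypotheses and cannot be argued away. What (ii) actually rules out is an exceptional set with nonempty \emph{interior}: Lemmas~2.4 and~2.5 of the paper produce, via Baouendi--Treves approximation and Bishop's disc construction, an analytic disc whose boundary lies in the interior of the exceptional set --- and note the disc lies a priori only in the maximal ideal space of a restriction algebra, so hypothesis (i) is needed to place it in $K$ before (ii) yields a contradiction (interior points of a disc are not peak points). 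Since complex tangents can persist on a proper subvariety of the top stratum, plain H\"ormander--Wermer approximation does not finish; one needs the version with a closed exceptional set $E$ and then the double induction on the E-filtration $\Sigma_{k+1}=(\Sigma_k)_\Phi$ interleaved with the singular sets $(\Sigma_k)_{\rm sing}$, which is the actual structure of the paper's proof of Theorem~1.4. In the paper, the statement you were asked to prove is then an immediate corollary of the dichotomy Theorem~1.1, since (ii) precludes analytic discs.
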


Note that this result is an immediate consequence of Theorem~\ref{main_theorem}.

Lee Stout \cite{stout_varieties} proved a result concerning approximation on real-analytic varieties without the peak point hypothesis.

\begin{theorem}[\cite{stout_varieties}, Theorem~1]
If $K$ is a compact, real-analytic subvariety of $\mathbb{C}^n$ that is holomorphically convex, then every continuous function on $K$ can be approximated uniformly by functions holomorphic on {\rm(}varying neighborhoods of\/{\rm)} $K$.  {\rm(}Here the hypothesis that $K$ is holomorphically convex means that the maximal ideal space of the algebra $\mathcal{O}(K)$ of functions holomorphic on $K$ is $K$.{\rm)} 
\end{theorem}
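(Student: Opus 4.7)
My plan is to deduce Stout's theorem from the paper's main result, Theorem~\ref{main_theorem}, applied to the uniform algebra $A = \mathcal{O}(K) \subset C(K)$, and then to rule out the analytic-disc alternative via the real-analytic structure of $K$.

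To invoke Theorem~\ref{main_theorem}, identify $\mathbb{C}^n$ with $\mathbb{R}^{2n}$ and apply the theorem with ambient Euclidean space $\mathbb{R}^{2n}$, with $V = K$, and with $\Phi$ the collection of all functions holomorphic on some neighborhood of $K$ in $\mathbb{C}^n$. Then $V$ is a real-analytic subvariety of $\mathbb{R}^{2n}$ by hypothesis; the boundary of $K$ as a subset of $V = K$ is empty and hence trivially a real-analytic subvariety of $V$; and each $f \in \Phi$ is real-analytic on $K$. The holomorphic-convexity hypothesis is precisely the statement that the maximal ideal space of $A$ equals $K$. Theorem~\ref{main_theorem} therefore yields the dichotomy: either $A = C(K)$, which is the desired conclusion, or $K$ contains an analytic disc.

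Suppose for contradiction that the second alternative holds, so there is a continuous injection $\sigma : D \to K$ with $f \circ \sigma$ holomorphic on $D$ for every $f \in A$. Since the coordinate functions $z_1, \ldots, z_n$ are entire and therefore lie in $\mathcal{O}(K)$, each $\sigma_j := z_j \circ \sigma$ is holomorphic on $D$, so $\sigma : D \to \mathbb{C}^n$ is a non-constant bounded holomorphic map with image contained in the compact set $K$.

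The main obstacle is to show that no such $\sigma$ can exist, i.e.\ that a compact real-analytic subvariety of $\mathbb{C}^n$ admits no non-constant bounded holomorphic disc. I propose to argue locally via complexification. Near $p = \sigma(\zeta_0)$, write $K \cap U = \{\rho_1 = \cdots = \rho_k = 0\}$ for real-analytic $\rho_j$ on a polydisc $U$; regarding $\bar z$ as an independent complex variable $w$ yields holomorphic complexifications $\widetilde{\rho}_j(z, w)$ on a neighborhood of $(p, \bar p)$ in $\mathbb{C}^{2n}$ with $\rho_j(z) = \widetilde{\rho}_j(z, \bar z)$. Let $\tau(\eta)$ be the holomorphic function whose Taylor coefficients are the complex conjugates of those of $\sigma$; then $\tau(\bar\zeta) = \overline{\sigma(\zeta)}$, so by analytic continuation the identities $\widetilde{\rho}_j(\sigma(\zeta), \overline{\sigma(\zeta)}) \equiv 0$ extend to $\widetilde{\rho}_j(\sigma(\zeta), \tau(\eta)) \equiv 0$ on a bidisc in $(\zeta, \eta)$-space. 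Freezing $\eta$ confines $\sigma(D)$ to a complex-analytic subvariety of $\mathbb{C}^n$; varying $\eta$ assembles these into a positive-dimensional complex-analytic germ inside $K$. A closure argument of Remmert--Stein / Bishop type, applied on shrinking subdiscs where the holomorphic curve has finite area, should then yield a compact complex-analytic subvariety of $\mathbb{C}^n$ of positive dimension lying in $K$, contradicting the maximum modulus principle applied to the coordinate functions. Making this closure step rigorous---controlling how the local complex-analytic pieces fit together and ensuring that their limit is a genuine compact complex-analytic variety in $\mathbb{C}^n$---is the technical heart of the argument.
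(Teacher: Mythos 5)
Your first step is exactly the paper's: apply Theorem~\ref{main_theorem} with $V=K$ viewed as a real-analytic subvariety of an open subset of $\mathbb{R}^{2n}$, $\partial K=\emptyset$, and $A$ the uniform algebra generated by the functions holomorphic on neighborhoods of $K$, so that either $A=C(K)$ or $K$ contains an analytic disc. The paper then disposes of the disc alternative in one line, by quoting the theorem of Diederich and Fornaess \cite{diederich-fornaess} that a compact real-analytic subvariety of $\mathbb{C}^n$ contains no nontrivial germ of a complex-analytic variety: since the coordinate functions $z_1,\ldots,z_n$ lie in $A$, your $\sigma$ is a nonconstant holomorphic map, and near any point where $\sigma'\neq 0$ its image is already a nontrivial germ of a one-dimensional complex-analytic variety contained in $K$. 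Note that this makes your complexification machinery unnecessary for \emph{producing} the germ (and, as a side remark, the set $\{z:\widetilde{\rho}_j(z,\tau(\eta))=0 \hbox{ for all } j,\eta\}$ that you build contains $\sigma$ near $p$ but need not be contained in $K$, since membership in $K$ requires $\widetilde{\rho}_j(z,\bar z)=0$ and $\bar z$ need not be a value of $\tau$). The entire content of the disc-exclusion step is that no such germ can exist in a compact real-analytic variety, and that is precisely the Diederich--Fornaess theorem.

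That is where your proposal has a genuine gap, and it is the step you yourself flag as the ``technical heart'': passing from a germ of a complex curve inside $K$ to a compact positive-dimensional complex-analytic subvariety of $\mathbb{C}^n$. No general mechanism performs this globalization --- a germ has no reason to continue to a compact variety, and Bishop-type limit theorems require a sequence of one-dimensional varieties with uniform area bounds converging in a fixed open set, data your construction does not supply; Remmert--Stein concerns extension of a variety across a small exceptional set, which is not the situation here. Indeed, Diederich and Fornaess do not argue by globalizing the germ at all; their proof is a subtler local argument combining complexification with an extremal-point/plurisubharmonicity argument, and it is a substantial theorem in its own right. So your maximum-modulus endgame, while correct once a compact positive-dimensional variety is in hand, rests on an unproven (and, as sketched, unprovable) step. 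The fix is simply to cite \cite{diederich-fornaess} at this point, which is exactly what the paper does.
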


A key ingredient in Stout's proof is 
a theorem of Diederich and Fornaess \cite{diederich-fornaess}, which states that a compact real-analytic variety in $\mathbb{C}^n$ contains no non-trivial germ of a complex-analytic variety. Observe that in view of this result of Diederich and Fornaess, Stout's theorem follows from Theorem~\ref{main_theorem} above.

In \cite{anderson_izzo_local} Anderson and the present author used an argument similar to the one used to prove the peak point theorem in \cite{anderson_izzo_varieties} to prove a localization result for uniform algebras generated by real-analytic functions.  That result also follows from Theorem~\ref{main_theorem}.  In stating the result we use the following terminology: A uniform algebra $A$ on a compact space $X$ is said to have the \emph{countable approximation property} if for each $f \in C(X)$ there exists a countable collection $\{M_{n}\}$ of compact subsets of $X$ with $\cup_{n=1}^{\infty} M_{n} = X$ and $f\big|_{M_{n}} \in \overline{A\big|_{M_{n}}}$ for each $n$.

\begin{theorem} Let $V$ be a real-analytic subvariety of an open set $\Omega \subset \mathbb{R}^n$, and let $K$ be a compact subset of $V$ such that $\partial K$ is a real-analytic subvariety of $V$.  Let $A$ be a uniform algebra on $K$ generated by a collection $\Phi$ of functions real-analytic on $K$.  Assume that the maximal ideal space of $A$ is $K$, and that $A$ has the countable approximation property. Then $A = C(K)$.
\end{theorem}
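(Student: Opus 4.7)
The plan is to deduce this statement directly from Theorem~\ref{main_theorem} by showing that the countable approximation property is incompatible with the presence of an analytic disc in $K$. Arguing by contradiction, I would assume $A \ne C(K)$; Theorem~\ref{main_theorem} then furnishes a continuous injection $\sigma:D\to K$ such that $f\circ\sigma$ is holomorphic on $D$ for every $f\in A$. Writing $\sigma=(\sigma_1,\ldots,\sigma_n)$ with respect to the Euclidean coordinates of $\mathbb{R}^n\supset K$, each component $\sigma_j=x_j\circ\sigma$ is a continuous real-valued function on $D$, where $x_j\in C(K)$ denotes the $j$-th coordinate function.

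The next step is to apply the countable approximation property once to each of the finitely many real-valued continuous functions $x_1,\ldots,x_n$, obtaining for every $j$ a countable compact cover $\{M_m^{(j)}\}_{m\geq 1}$ of $K$ with $x_j|_{M_m^{(j)}}\in\overline{A|_{M_m^{(j)}}}$. Passing to the countable family of finite intersections
$N_{\mathbf{m}}=M_{m_1}^{(1)}\cap\cdots\cap M_{m_n}^{(n)}$, indexed by $\mathbf{m}=(m_1,\ldots,m_n)\in\mathbb{N}^n$, yields a countable cover of $K$ by compact sets on each of which \emph{every} coordinate function $x_j$ lies in the uniform closure of $A|_{N_{\mathbf{m}}}$ simultaneously.

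The argument concludes with a Baire category step on the disc. The pullbacks $\sigma^{-1}(N_{\mathbf{m}})$ form a countable closed cover of $D$, and since $D$ is a Baire space, one of them contains a non-empty open subdisc $D'$. On $D'$ each $\sigma_j=x_j\circ\sigma$ is a uniform limit of functions of the form $g\circ\sigma$ with $g\in A$, hence a uniform limit of holomorphic functions, hence itself holomorphic on $D'$. But a real-valued holomorphic function on a connected open set is constant, so $\sigma$ is constant on $D'$, contradicting injectivity.

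I do not anticipate any serious obstacle, because Theorem~\ref{main_theorem} is doing all the genuine work: it converts the abstract hypothesis ``maximal ideal space equals $K$'' into the concrete geometric statement that either $A=C(K)$ or there is an embedded analytic disc. Once such a disc exists, any hypothesis that produces ``enough'' continuous functions approximable on a countable cover trivially kills it by Baire. The only mild subtlety is the bookkeeping needed to ensure that all $n$ coordinate approximation statements hold on a common piece of the cover; this is handled by the intersection $N_{\mathbf{m}}$ above.
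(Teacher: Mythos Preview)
Your argument is correct and follows the same overall strategy as the paper: invoke Theorem~\ref{main_theorem} to produce an analytic disc $\sigma:D\to K$ whenever $A\neq C(K)$, and then use a Baire category argument to contradict the countable approximation property. The only difference is in the choice of test function. The paper shrinks $D$ so that $\sigma$ extends to a homeomorphism of $\overline D$ into $K$, picks a single $g\in C(K)$ with $g\circ\sigma=\bar z$, and observes that on any piece $M_n$ containing $\sigma$ of an open set one cannot have $g|_{M_n}\in\overline{A|_{M_n}}$ since $\bar z$ is not a uniform limit of holomorphic functions. You instead exploit that $K\subset\mathbb{R}^n$, apply the property to the real-valued coordinate functions $x_1,\ldots,x_n$, intersect the resulting covers, and conclude via the open mapping theorem that a real-valued holomorphic function is constant. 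Your version trades the (trivial, Tietze) construction of $g$ for the bookkeeping of intersecting $n$ covers; both are equally short and either works.
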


To show that this follows from Theorem~\ref{main_theorem} it suffices to show that if a uniform algebra $A$ on a compact set $K$ satisfies the countable approximation property, then $K$ contains no analytic disc.  Suppose on the contrary, that there is an analytic disc in $K$, i.e., a continuous injective map $\sigma:D\rightarrow K$ such that $f\circ \sigma$ is holomorphic for every function $f$ in $A$.  By replacing $D$ by a smaller disc with compact closure in $D$, we may assume that $\sigma$ extends to a homeomorphism of $\overline D$ into $K$.  Then there exists a function $g\in C(K)$ such that $g\circ\sigma=\overline z$.  Let $\{M_n\}$ be a countable collection of compact subsets of $K$ with $\cup_{n=1}^{\infty} M_{n} = K$.  By the Baire category theorem, some $M_n$ contains the image under $\sigma$ of an open set of $D$.  Then $g|M_n\notin \overline{A|M_n}$.  Thus $A$ does not have the countable approximation property.

It should be pointed out that the two-dimensional peak point theorem in 
\cite{anderson_izzo_two_manifolds} mentioned above is of a different nature from the theorems involving real-analytic functions since it requires only $C^1$-smoothness.  That theorem is not contained in Theorem~\ref{main_theorem}.  It has, however, been strengthened by Swarup Ghosh \cite{ghosh} by replacing the peak point hypothesis by the the hypothesis that each point of the maximal ideal space is isolated in the topology inherited from the normed dual $A^*$ of $A$.  The real-analytic peak point theorems in dimension three given in \cite{anderson_izzo_smooth_manifolds} and 
\cite{anderson_izzo_wermer_three_dimensions} have also been similarly strengthened by Ghosh \cite{ghosh}, \cite{ghosh2}.

Much of the proof of Theorem~\ref{main_theorem} is essentially a repetition of the proofs given in \cite{anderson_izzo_varieties} and \cite{anderson_izzo_local}.  We will, nevertheless, present the proof in full detail as we believe that the proof should be given explicitly in the literature, and changes from the earlier  proofs are needed.
In particular different lemmas must be used.  Moreover, while the proofs in \cite{anderson_izzo_varieties} and \cite{anderson_izzo_local} were carried out via duality working with an individual annihilating measure for the algebra $A$, it seems that that approach does not work here and that we must instead consider all annihilating measures simultaneously.  That can be done conveniently by using the notion of essential set which was introduced by Herbert Bear \cite{bear} and whose definite is recalled below in the next section.  In fact, by using this notion, we will avoid altogether explicit mention of annihilating measures in the proof.  

To prove Theorem~\ref{main_theorem}, we will prove the following.

\begin{theorem} \label{support_theorem} Let $V$ be a real-analytic subvariety of an open set $\Omega \subset \mathbb{R}^n$, and let $K$ be a compact subset of $V$.  Let $A$ be a uniform algebra on $K$ generated by a collection $\Phi$ of functions real-analytic on $K$.  Suppose that the maximal ideal space of $A$ is $K$ and that $K$ contains no analytic discs.  Then the essential set $\essa$ for $A$ is contained in $\partial K$. \end{theorem}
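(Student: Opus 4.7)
The plan is to prove Theorem \ref{support_theorem} by showing that every point $p \in K \setminus \partial K$ lies outside the essential set $\essa$. Recall that by Bear's definition, $p \notin \essa$ if and only if there exists an open neighbourhood $U$ of $p$ in $K$ such that every continuous function on $K$ with support in $U$ belongs to $A$ --- equivalently, no annihilating measure for $A$ has $p$ in its support.

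Fix $p \in K \setminus \partial K$, so that some open neighbourhood of $p$ in $V$ lies in $K$. I would work locally by selecting finitely many generators $\phi_1, \dots, \phi_m \in \Phi$ and combining them with the ambient coordinate functions on $\mathbb{R}^n$ to obtain a real-analytic map $\Psi = (z_1,\dots,z_n,\phi_1,\dots,\phi_m)$ from a closed neighbourhood of $p$ in $V$ into $\mathbb{C}^{n+m}$. Since the first $n$ coordinates of $\Psi$ are the ambient real coordinates, the map is injective, and any analytic disc in its image would pull back to an analytic disc in $K$ for $A$; by hypothesis the image therefore contains no analytic disc.

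The technical core is then a local approximation argument near $p$, broadly following the template of \cite{anderson_izzo_varieties} and \cite{anderson_izzo_local}. One uses real-analytic techniques --- complexification, the Diederich-Fornaess theorem that a compact real-analytic subvariety of $\mathbb{C}^N$ contains no non-trivial complex-analytic germ, and approximation theorems in the H\"ormander-Wermer / Stout spirit --- to conclude that every continuous function with sufficiently small support near $p$ lies in $A$. The essential change from the earlier papers, as flagged in the introduction, is that one cannot focus on one annihilating measure at a time; instead all annihilating measures must be controlled simultaneously, which is what the essential-set formulation accomplishes by packaging the supports of all such measures into a single closed set.

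The main obstacle I anticipate is upgrading purely local uniform approximation to the essential-set statement that functions with small support actually belong to $A$; in the single-measure framework this was handled via duality on an individual annihilating measure, but here one must patch local approximation information into the global essential-set conclusion. A secondary technical hurdle is handling singular strata of $V$ at $p$: one would likely peel away lower-dimensional strata, perhaps by induction on real dimension, because the real-analytic embedding techniques work most smoothly on the top-dimensional smooth stratum, while the essential-set conclusion must be proved at every interior point of $K$, including the singular ones.
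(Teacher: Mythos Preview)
Your proposal has the right outer shape --- prove each interior point misses $\essa$, with some dimension induction for the singular strata --- but both the one concrete step you give and the ``technical core'' miss the actual mechanism.

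The embedding $\Psi = (z_1,\dots,z_n,\phi_1,\dots,\phi_m)$ does not do what you claim. The ambient coordinates $z_j$ are real-valued, so any holomorphic disc in the image of $\Psi$ has constant first $n$ coordinates and projects to a single point of $K$; the pullback statement is vacuous. Moreover the $z_j$ need not belong to $A$ (which is generated by $\Phi$), and picking finitely many $\phi_j$ gives no control over the rest of $\Phi$. More importantly, you have not identified how ``no analytic discs'' is actually converted into approximation. The paper does this through the \emph{exceptional set} $K_\Phi$ --- the points where the differentials $df_1,\dots,df_m$ of generators never span. The absence of analytic discs forces $K_\Phi$ to have empty interior (Lemma~\ref{exceptional_set_empty_int}, proved via Baouendi--Treves CR approximation together with a Bishop-type disc lemma, not Diederich--Fornaess), so $K_\Phi$ is a proper real-analytic subvariety of the regular locus. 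Off the exceptional and singular sets, the author's approximation theorem (Theorem~\ref{approximation_theorem}) then shows $\essa$ lies in their union. One iterates this, forming the exceptional set of the new variety and dropping dimension each time (the $E$- and $S$-filtrations of Section~\ref{proof}), with a double induction to interleave the exceptional and singular strata. Diederich--Fornaess plays no role here --- the hypothesis already excludes discs --- and the vague reference to H\"ormander--Wermer/Stout-style approximation does not substitute for the exceptional-set/filtration machinery, which is the entire content of the argument.
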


Here and throughout the paper $\partial K$ denotes the boundary of $K$ relative to $V$.  The interior of $K$ relative to $V$ will be denoted by ${\rm int}(K)$.

We will show at the beginning of Section~3 that Theorem~\ref{main_theorem} is an easy consequence of Theorem~\ref{support_theorem}, whose proof occupies the bulk of Section~3.
In Section \ref{prelims} we collect some preliminary lemmas and comment on the general outline of the proof of Theorem \ref{support_theorem}.

While Theorem~\ref{main_theorem} seems to be the definitive result concerning conditions under which a uniform algebra generated by real-analytic functions on its maximal ideal space consists of all continuous functions, the theorem suggests the possibility of an even stronger theorem.  Specifically, it suggests the possibility of completely describing uniform algebras generated by real-analytic functions in terms of CR structures.  That possibility will, however, not be explored here.

It is a pleasure to dedicate this paper to John Wermer on the
occasion of his 90th birthday.  Much of the author's work has been inspired by Wermer, and that is especially so in the present case.  Indeed, the main result presented here can be seen as the end result of a joint project of Anderson, Wermer, and the author that was initiated in 1999 by Wermer's suggestion to seek a peak point theorem for uniform algebras generated by real-analytic functions.

\section{Preliminaries} \label{prelims}

Let $\Sigma$ be a real-analytic variety in the open set $W \subset \mathbb{R}^n$, that is, for each point $p \in W$ there is a finite set $\mathcal{F}$ of functions real-analytic in a neighborhood $N$ of $p$ in $W$ with $\Sigma $ the common zero set of $\mathcal{F}$ in $N$. The following definitions and facts are standard; see for example \cite{narasimhan}. We let $\Sigma_{\mbox{\tiny reg}}$ denote the set of points $p \in \Sigma$ for which there exists a neighborhood $N$ of $p$ in $\mathbb{R}^n$ such that $\Sigma \cap N$ is a regularly
imbedded real-analytic submanifold of $N$ of some dimension $d = d(p)$.   This dimension $d(p)$ is locally constant on $\Sigma_{\mbox{\tiny reg}}$.  The dimension of $\Sigma,$ denoted by $\mbox{dim}(\Sigma)$, is defined to be the largest such $d(p)$ as $p$ ranges over the regular points of $\Sigma$.  The singular set of $\Sigma$, denoted by $\Sigma _{\mbox{\tiny sing}}$, is the complement in $\Sigma$ of $\Sigma_{\mbox{\tiny reg}}$.  If $\Sigma^{\prime} \subset \Sigma$ is a real-analytic subvariety of some open set $W^{\prime} \subset W$ and $\Sigma^{\prime}$ has empty interior relative to $\Sigma$, then $\mbox{dim}(\Sigma^{\prime}) < \mbox{dim}(\Sigma)$.  Both $\Sigma$ and $\Sigma_{\mbox{\tiny sing}}$ are closed in $W$.  Although $\Sigma_{\mbox{\tiny sing}}$ may not itself be a subvariety of $W$, it is locally contained in a proper subvariety of $\Sigma$: for each $p \in \Sigma _{\mbox{\tiny sing}}$ there is a real-analytic subvariety $Y$ of an open neighborhood $N$ of $p$ such that $\Sigma_{\mbox{\tiny sing}} \cap N \subset Y$  and $\mbox{dim}(Y) < \mbox{dim}(\Sigma)$.

If $L$ is a subset of an $m$-dimensional manifold $M$ of class $C^1$ and
$\Phi$ is a collection 
of functions that are $C^{1}$ on (varying neighborhoods of) 
$L$, we define the {\em exceptional set $L_{\Phi}$ of $L$ relative to $\Phi$}  by
\begin{equation} L_{\Phi} = \{ p \in L : df_{1} \wedge \ldots \wedge df_{m}(p) = 0 \mbox{ for all }m \mbox{-tuples } (f_{1}, \ldots , f_{m}) \in  \Phi^m \}. \label{eq:exceptional} \end{equation}
If $\Sigma$ is a real-analytic variety and $\Phi$ is a collection of functions real-analytic on $\Sigma$ (i.e., each function in $\Phi$ extends to be real-analytic in a neighborhood, depending on the function, of $\Sigma$), then $\Sigma_{\Phi}$ is defined to be the set of all points $p \in \Sigma_{\mbox{\tiny reg}}$ such that in a neighborhood of $p$, the set  $\Sigma_{\mbox{\tiny reg}}$ is an $m$-dimensional manifold $M$, and $p \in M_{\Phi}$ as defined in (\ref{eq:exceptional}).

The following is proven in \cite[Lemma~2.1]{anderson_izzo_varieties}.

\begin{lemma} \label{exceptional_set_variety} Let $\Sigma$ be a real-analytic variety in an open set $W \subset \mathbb{R}^n$, and $\Phi$ a collection of functions real-analytic on $\Sigma$.  Then $\Sigma_{\Phi}$ is a subvariety of $W \setminus \Sigma _{\mbox{\rm\tiny sing}}$. \end{lemma}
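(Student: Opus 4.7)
My plan is to prove the conclusion pointwise: for every $p \in W \setminus \Sigma _{\mbox{\tiny sing}}$, I will exhibit a neighborhood $N$ of $p$ in $W \setminus \Sigma _{\mbox{\tiny sing}}$ on which $\Sigma_{\Phi} \cap N$ is the common zero set of finitely many real-analytic functions on $N$. When $p \notin \Sigma$, one simply takes $N$ disjoint from $\Sigma$, so that $\Sigma_{\Phi} \cap N = \emptyset$; the content is therefore in the case $p \in \Sigma_{\mbox{\tiny reg}}$.

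Assuming $p \in \Sigma_{\mbox{\tiny reg}}$, I shrink $N$ so that $M := \Sigma_{\mbox{\tiny reg}} \cap N$ is a real-analytic submanifold of $N$ of some fixed dimension $m$, cut out in $N$ by finitely many real-analytic functions $g_{1}, \ldots, g_{\ell}$. For each $m$-tuple $f = (f_{1}, \ldots, f_{m})$ drawn from $\Phi$, the Jacobian $J_{f}$ that reads the form $df_{1} \wedge \cdots \wedge df_{m}|_{M}$ in a local chart of $M$ is a real-analytic function defined on all of $M$ and independent of the choice of real-analytic extensions of the $f_{i}$ to $\mathbb{R}^{n}$; consequently $\Sigma_{\Phi} \cap N$ is the common zero locus inside $M$ of the entire (a priori infinite) family $\{J_{f}\}$.

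To reduce this family to finitely many constraints I invoke the fact that the local ring of germs at $p$ of real-analytic functions on $M$ is Noetherian, so the ideal generated by the germs of all the $J_{f}$'s is finitely generated. I may therefore select tuples $f^{(1)}, \ldots, f^{(k)}$ whose Jacobians generate this same ideal germ-wise at $p$; letting $V$ denote the common zero set of $J_{f^{(1)}}, \ldots, J_{f^{(k)}}$ in a neighborhood of $p$ in $M$, the inclusion $\Sigma_{\Phi} \cap N \subseteq V$ is immediate. For the reverse inclusion I shrink $N$ further so that the germ of $V$ at $p$ is realized as a finite union $V^{(1)} \cup \cdots \cup V^{(r)}$ of connected irreducible real-analytic subvarieties. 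For each tuple $f$, the germ-level representation $J_{f} = \sum_{i} a_{f,i} J_{f^{(i)}}$ forces $J_{f}$ to vanish on some open neighborhood of $p$ in each $V^{(j)}$; the identity principle for real-analytic functions on irreducible real-analytic subvarieties then gives $J_{f} \equiv 0$ on every $V^{(j)}$, hence on all of $V$. Combining the two inclusions yields $\Sigma_{\Phi} \cap N = V$, and adjoining $g_{1}, \ldots, g_{\ell}$ to $J_{f^{(1)}}, \ldots, J_{f^{(k)}}$ exhibits $\Sigma_{\Phi} \cap N$ as a real-analytic subvariety of $N$.

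The main obstacle I anticipate is the gap between the germ-level ideal membership---where each relation $J_{f} = \sum_{i} a_{f,i} J_{f^{(i)}}$ is guaranteed only on an $f$-dependent neighborhood of $p$---and the set-theoretic equality $V = \Sigma_{\Phi} \cap N$ on a single fixed neighborhood. Closing that gap is precisely what forces the decomposition of $V$ into its irreducible components and the use of the real-analytic identity principle to propagate the vanishing of each $J_{f}$ from a tiny germ neighborhood of $p$ across the entire irreducible piece.
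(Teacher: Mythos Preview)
The paper does not actually prove this lemma; it merely cites it as \cite[Lemma~2.1]{anderson_izzo_varieties}. There is therefore no in-paper argument to compare yours against.

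That said, your proposal is sound and is essentially the standard way one proves such a statement. The core observation---that the condition defining $\Sigma_\Phi$ is the simultaneous vanishing of the real-analytic functions $J_f$ on $M$, and that Noetherianity of the local ring of real-analytic germs on $M$ at $p$ lets you replace the (possibly uncountable) family $\{J_f\}$ by finitely many---is exactly right. You are also correct that Noetherianity alone only yields the germ-level statement, since the relations $J_f=\sum_i a_{f,i}J_{f^{(i)}}$ hold on $f$-dependent neighborhoods; your device of passing to the irreducible components of $V$ and invoking the identity principle to propagate the vanishing of each $J_f$ across the whole of each $V^{(j)}$ is the standard remedy and works as stated. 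One minor point worth making explicit: even if $p$ happens to be a singular point of some $V^{(j)}$, the germ relation still forces $J_f$ to vanish on a neighborhood of $p$ in $V^{(j)}$, and since the regular locus of an irreducible real-analytic germ is connected and dense, the identity principle still applies. Also, your $J_f$ depends on the choice of local chart on $M$, but only up to a nowhere-vanishing factor, so its zero set does not; you may wish to say this in one sentence.
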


The next lemma (a generalization of \cite[Lemma~2.5]{anderson_izzo_wermer_three_dimensions}) and the corollary that follows are no doubt well known, but they do not seem to be presented in the standard texts.  Here we denote the dimension of a vector space $V$ over a field $F$ by $\dim_F V$.

\begin{lemma}\label{dim-complex-tangent-space}
Let $M$ be a real $m$-dimensional submanifold of $\mathbb{C}^n$ of class $C^1$.  Regard the differentials $\dz$ of the coordinate functions $z_1,\ldots z_n$ as forms on $M$.  Then the dimension of the complex tangent space $H_pM$ to $M$ at a point $p$ is given by
$$\dim_{\mathbb{C}} H_pM=m-\dim_{\mathbb{C}}\lspan\{\dz\}.$$
\end{lemma}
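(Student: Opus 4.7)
The plan is to translate both sides of the claimed identity into invariants of the single real-$m$-dimensional subspace
\[ W := \{(dz_1(v),\ldots,dz_n(v)) : v\in T_pM\} \subset \mathbb{C}^n, \]
and then reduce to a purely linear-algebraic fact about real subspaces of a complex vector space. I write $\rho:T_pM\to\mathbb{C}^n$ for the map $v\mapsto(dz_1(v),\ldots,dz_n(v))$. This is the restriction to $T_pM$ of the analogous map defined on all of $T_p\mathbb{C}^n$, which in real coordinates $(x_j,y_j)$ sends $\sum a_j\,\partial/\partial x_j+b_j\,\partial/\partial y_j$ to $(a_1+ib_1,\ldots,a_n+ib_n)$. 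In particular $\rho$ is $\mathbb{R}$-linear and injective, and it intertwines the ambient complex structure $J$ on $T_p\mathbb{C}^n$ with scalar multiplication by $i$ on $\mathbb{C}^n$, i.e.\ $\rho(Jv)=i\rho(v)$.

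Since $H_pM=T_pM\cap J(T_pM)$, the intertwining gives $\rho(H_pM)=W\cap iW$, and $\rho$ restricted to $H_pM$ is a $\mathbb{C}$-linear isomorphism onto this intersection. Hence
\[ \dim_\mathbb{C} H_pM=\dim_\mathbb{C}(W\cap iW). \]

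Next I would read $\dim_\mathbb{C}\lspan\{\dz\}$ off the dual map $\rho^{*}:(\mathbb{C}^n)^{*}\to\operatorname{Hom}_\mathbb{R}(T_pM,\mathbb{C})$, $\alpha\mapsto\alpha\circ\rho$, where $(\mathbb{C}^n)^{*}$ denotes the space of $\mathbb{C}$-linear functionals. Applying $\rho^{*}$ to the standard complex dual basis recovers exactly the restrictions $dz_1|_{T_pM},\ldots,dz_n|_{T_pM}$, so $\lspan\{\dz\}$ is the image of $\rho^{*}$. A $\mathbb{C}$-linear functional $\alpha$ satisfies $\alpha\circ\rho=0$ iff $\alpha|_W=0$, and since $\alpha$ is $\mathbb{C}$-linear this is equivalent to $\alpha|_{W+iW}=0$. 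Thus $\ker\rho^{*}$ is the complex annihilator of $W+iW$, of complex dimension $n-\dim_\mathbb{C}(W+iW)$, and
\[ \dim_\mathbb{C}\lspan\{\dz\}=\dim_\mathbb{C}(\operatorname{image}\rho^{*})=\dim_\mathbb{C}(W+iW). \]

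The lemma then reduces to the identity
\[ \dim_\mathbb{C}(W\cap iW)+\dim_\mathbb{C}(W+iW)=\dim_\mathbb{R} W=m, \]
valid for any real subspace $W$ of a complex vector space. This follows from the real-dimension formula $\dim_\mathbb{R}(W+iW)+\dim_\mathbb{R}(W\cap iW)=\dim_\mathbb{R} W+\dim_\mathbb{R}(iW)=2m$ combined with the observation that $W+iW$ and $W\cap iW$ are $\mathbb{C}$-subspaces, so their real dimensions are twice their complex dimensions; dividing by $2$ gives the identity. No step presents a genuine obstacle: the only things to verify carefully are the intertwining $\rho(Jv)=i\rho(v)$ and the dual-space identification, both of which are direct once the setup is in place.
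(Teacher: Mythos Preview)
Your proof is correct and follows the same underlying strategy as the paper: both reduce the lemma, via the dimension identity $\dim_{\mathbb{R}}(T_pM+iT_pM)+\dim_{\mathbb{R}}(T_pM\cap iT_pM)=2m$ (your $W$ being $T_pM$ realized inside $\mathbb{C}^n$ via $\rho$), to showing $\dim_{\mathbb{C}}\lspan\{dz_1,\ldots,dz_n\}=\dim_{\mathbb{C}}(T_pM+iT_pM)$. The only difference is in how this last equality is verified: you argue coordinate-free using the dual map $\rho^{*}$ and annihilators, whereas the paper picks local coordinates $u_1,\ldots,u_m$ on $M$ and observes that both sides equal the rank of the Jacobian matrix $(\partial z_j/\partial u_k)(p)$.
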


\begin{proof}
Fix $p\in M$.  Note that
\begin{eqnarray*}
\dim_{\mathbb{R}}(T_pM+iT_pM)+\dim_{\mathbb{R}}H_pM&=&
\dim_{\mathbb{R}}(T_pM+iT_pM)+\dim_{\mathbb{R}}(T_pM\cap iT_pM)\\
&=&2m.
\end{eqnarray*}
Thus it suffices to show that 
\begin{equation}\label{dim-equation}
\dim_{\mathbb{C}}(T_pM+iT_pM)=\dim_{\mathbb{C}}\lspan\{\dz\}.
\end{equation}

Choose local coordinates $u_1,\ldots, u_m$ on $M$ in a neighborhood of $p$.  The vectors\break $\bigl(\partial z_1/\partial u_k (p),\ldots, \partial z_n/ \partial u_k(p)\bigr)$, $1\leq k\leq m$, form a basis for $T_pM+iT_pM$ over $\mathbb{C}$.
The forms $\dz$ can be expressed in terms of the basis $du_1,\ldots, du_m$ for the complexified cotangent space as 
$$dz_j=\frac{\partial z_j}{\partial u_1}\,du_1 +\cdots+\frac{\partial z_j}{\partial u_m} \,du_m.$$
Thus both sides of equation (\ref{dim-equation}) equal the rank of the matrix $(\partial z_j/\partial u_k)$ at $p$.
\end{proof}

\begin{corollary}\label{dim-cor}
Let $M$ be a real $m$-dimensional manifold of class $C^1$.  Let $f_1,\ldots, f_m$ be $C^1$-smooth functions on a neighborhood $U$ of a point $x\in M$ such that $f=(f_1,\ldots, f_m):U\rightarrow \mathbb{C}^m$ is an embedding.  Then the dimension of the complex tangent space $H_{f(x)}f(U)$ to $f(U)$ at $f(x)$ is given by
$$\dim_{\mathbb{C}}H_{f(x)}f(U)=m-\dim_{\mathbb{C}}\lspan\{df_1,\ldots, df_m\}.$$
\end{corollary}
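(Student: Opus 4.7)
The plan is to derive this corollary as an immediate application of Lemma~\ref{dim-complex-tangent-space} to the image manifold $f(U)$, transferring the cotangent information from $f(U) \subset \mathbb{C}^m$ back to $U$ via the embedding $f$.

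First I would set $N = f(U)$. Since $f$ is an embedding of the $m$-dimensional $C^1$ manifold $U \subset M$ into $\mathbb{C}^m$, the set $N$ is a real $m$-dimensional $C^1$ submanifold of $\mathbb{C}^m$. Applying Lemma~\ref{dim-complex-tangent-space} to $N$ with $n=m$ (so that $z_1,\ldots,z_m$ are the coordinates on the ambient $\mathbb{C}^m$) yields
$$\dim_{\mathbb{C}}H_{f(x)}N = m - \dim_{\mathbb{C}}\lspan\{dz_1,\ldots,dz_m\},$$
where the forms $dz_j$ are understood as restricted to $N$ at the point $f(x)$.

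The remaining step is to identify the span on the right with $\lspan\{df_1,\ldots,df_m\}$ at $x$. Since $f$ is an embedding, the pullback $f^*$ is a $\mathbb{C}$-linear isomorphism from the complexified cotangent space of $N$ at $f(x)$ onto the complexified cotangent space of $M$ at $x$. Because $z_j \circ f = f_j$, one has $f^*(dz_j) = df_j$ for each $j$, so $f^*$ sends $\lspan\{dz_1,\ldots,dz_m\}$ bijectively onto $\lspan\{df_1,\ldots,df_m\}$, and in particular the two spans have the same complex dimension. Substituting this equality into the displayed formula above gives the corollary.

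There is no real obstacle here; the only thing to be a little careful about is that $f$ being an embedding ensures the pullback on (complexified) cotangent spaces is an isomorphism, which is what allows the change of viewpoint from forms on $f(U)$ to forms on $U$.
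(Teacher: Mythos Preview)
Your proof is correct and follows exactly the approach of the paper, which simply says the result follows from Lemma~\ref{dim-complex-tangent-space} via the pullback $f^*$. You have merely spelled out the details (that $f^*(dz_j)=df_j$ and that $f^*$ is an isomorphism on complexified cotangent spaces), which is fine.
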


\begin{proof}
This follows immediately from the preceding lemma using the pull back
$f^*:T_{f(x)}f(U)\rightarrow T_xU$.
\end{proof}

\begin{lemma}\label{exceptional_set_empty_int} Let $M$ be a real $m$-dimensional manifold of class $C^{2}$.
Suppose $K$ is a compact subset of $M$, and $A$ is a uniform algebra on $K$ generated by a collection $\Phi$ of functions of class $C^2$ on (varying neighborhoods of) $K$ in $M$.  If the exceptional set $K_{\Phi}$ has nonempty interior ${\rm int}(K_{\Phi})$ in $M$, then the maximal ideal space of $A$ contains an analytic disc whose boundary is contained in ${\rm int}(K_{\Phi})$.
\end{lemma}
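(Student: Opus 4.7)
The plan is to exploit the degeneracy of the differentials on $\mathrm{int}(K_\Phi)$ to factor the generators through a map $F$ into $\mathbb{C}^k$ for some $k<m$, producing a CR submanifold of positive CR dimension from which the analytic disc can be extracted.

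Fix a point $p\in\mathrm{int}(K_\Phi)$ and set $k=\dim_{\mathbb{C}}\lspan\{df(p):f\in\Phi\}$, which by the definition of $K_\Phi$ satisfies $k\leq m-1$. Since this dimension is lower semicontinuous, after choosing $p$ appropriately within $\mathrm{int}(K_\Phi)$ and shrinking to a coordinate neighborhood $U\subset\mathrm{int}(K_\Phi)$ of $p$, I may assume the dimension is constantly $k$ on $U$ and that some fixed $f_1,\dots,f_k\in\Phi$ have $\mathbb{C}$-linearly independent differentials throughout $U$. The product rule gives $dg\in\lspan_{\mathbb{C}}\{df_1,\dots,df_k\}$ on $U$ for every polynomial $g$ in $\Phi$; since polynomials in $\Phi$ are dense in $A$, every element of $A$ is locally constant along the real fibers of the map $F=(f_1,\dots,f_k):U\to\mathbb{C}^k$. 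Because $A$ separates points of $K\supset U$, the map $F$ must be locally injective, and after further shrinking $U$ I may take $F$ to be a $C^2$-embedding of $U$ onto an $m$-dimensional real submanifold of $\mathbb{C}^k$; in particular $m\leq 2k$. The generalized form of Corollary~\ref{dim-cor} then yields $\dim_{\mathbb{C}} H_{F(q)}F(U)=m-k\geq 1$ at every $q\in U$.

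In the case $m=2k$, the map $F$ is a local diffeomorphism onto an open subset of $\mathbb{C}^k$, and pulling back the standard complex structure equips $U$ with a $k$-dimensional complex structure with respect to which the relation $dg\in\lspan_{\mathbb{C}}\{df_1,\dots,df_k\}$ is precisely the condition $\bar\partial g\equiv 0$; hence every generator in $\Phi$, and by uniform limits every element of $A$, is holomorphic on $U$, and $U$ itself contains analytic discs whose closures lie in $U\subset\mathrm{int}(K_\Phi)$. In the case $m<2k$, the image $F(U)$ is a generic $C^2$ CR submanifold of $\mathbb{C}^k$ of positive CR dimension $m-k$, and the restriction of $A$ to a compact subset of $U$ is isomorphic via $F$ to the uniform closure of the polynomials in the coordinate functions on the corresponding compact subset of $F(U)$. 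The maximal ideal space of this algebra is a subset of the polynomial hull of $F(\overline{U})$ in $\mathbb{C}^k$, and a Bishop-type attached-disc argument for generic CR submanifolds of positive CR dimension produces an analytic disc in this polynomial hull with boundary on $F(U)$; pulling this disc back via $F^{-1}$ yields the required analytic disc in the maximal ideal space of $A$ with boundary in $U$.

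The main obstacle is the case $m<2k$: converting the positive CR dimension of $F(U)$ into an analytic disc attached to $F(U)$ with interior in the polynomial hull requires a delicate disc-attachment theorem in the $C^2$ category, and one must then track the disc back through the identification with the maximal ideal space of the full algebra $A$ rather than that of the finitely generated subalgebra. The case $m=2k$, by contrast, is essentially immediate once the pulled-back complex structure on $U$ is set up.
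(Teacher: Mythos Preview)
Your strategy is essentially the paper's: locate a point of maximal complex rank $k=r$ of the differentials, embed a neighborhood as a CR submanifold of positive CR dimension, and then pull an analytic disc out of a Bishop-type result. The one structural difference is that you embed via $F=(f_1,\dots,f_k)$ into $\mathbb{C}^k$, whereas the paper adjoins further functions $f_{r+1},\dots,f_n\in\Phi$ to embed into a higher-dimensional $\mathbb{C}^n$. Your choice makes $F(U)$ automatically generic, which is pleasant; the paper instead has to remark that Baouendi--Treves for nongeneric CR manifolds reduces to the generic case.

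There are, however, two genuine gaps.

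First, the step ``$A$ separates points, so $F$ is locally injective, so $F$ is a $C^2$-embedding'' is not valid as stated: a locally injective $C^2$ map need not be an immersion (think of $t\mapsto t^3$). What you actually need is that the \emph{real} rank of $dF$ equals $m$ somewhere in $U$; equivalently, that the real and imaginary parts of the $df_j$ span $T^*_qM$ at some $q$. This is exactly the ``claim'' paragraph in the paper's proof, established by going to a point of maximal real rank and invoking the constant rank theorem: if the maximal real rank were below $m$, the fibers would be positive-dimensional submanifolds on which every $g\in\Phi$ (having $dg\in\lspan_{\mathbb C}\{df_j\}$) is constant, contradicting separation. Your fiber heuristic is the right intuition, but the argument has to pass through the rank of $dF$, not merely injectivity.

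Second, and more seriously, in the case $m<2k$ you assert that $\overline{A|_N}$ is, via $F$, the uniform closure of the polynomials on $F(N)$. This is precisely where the Baouendi--Treves approximation theorem enters in the paper, and you have omitted it entirely. Knowing that each $g\in\Phi$ satisfies $dg\in\lspan_{\mathbb C}\{df_1,\dots,df_k\}$ on $U$ tells you $g\circ F^{-1}$ is CR on $F(U)$, but CR functions are not automatically uniform limits of holomorphic polynomials; Baouendi--Treves supplies this locally. Without it, the Bishop disc you produce lies in the polynomial hull of $F(N)$, i.e.\ in the maximal ideal space of the subalgebra generated by $f_1,\dots,f_k$, and you have no mechanism to place it in the maximal ideal space of $A$ (your own final paragraph flags this tracking problem without resolving it). The paper handles your remaining self-identified obstacle, the attached-disc step, by invoking the explicit Lemma~\ref{swarup-lemma}, a consequence of Bishop's theorem valid in class $C^2$; once Baouendi--Treves is in place, that lemma plus the inclusion of maximal ideal spaces finishes the proof exactly as you outline.
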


The proof will use ideas contained in the proof of \cite[Theorem~1.13]{izzo_kalm_wold} along with the following lemma which follows from a result of Errett Bishop \cite[Theorem~18.7]{Alex-Wermer} and is proven in \cite[Lemma~3.8]{ghosh}.

\begin{lemma}\label{swarup-lemma} Let $M$ be a real $m$-dimensional submanifold of $\mathbb{C}^n$ of class $C^{2}$.  Let $E$ be the set of points at which $M$ has a complex tangent.  Assume that $U$ is an open subset of $\mathbb{C}^n$ so that $M\cap U$ is a nonempty subset of $E$.  Then $M\cap U$ 
contains the boundary of an analytic disc.\end{lemma}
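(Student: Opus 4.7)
Fix a point $p \in {\rm int}(K_\Phi)$ and a small open neighborhood $U$ of $p$ in $M$ contained in ${\rm int}(K_\Phi)$. Since $A$ is generated by $\Phi$ and separates the points of $K$, I can select finitely many $f_1, \ldots, f_n \in \Phi$ whose real and imaginary parts separate points on $\overline U$ and have everywhere rank-$m$ real Jacobian there, so that $f := (f_1, \ldots, f_n)\colon U \to \mathbb{C}^n$ is a $C^2$ embedding. By enlarging the collection if necessary, I also arrange that at every $q \in U$ the complex linear span of $df_1(q),\ldots, df_n(q)$ coincides with the complex linear span of $\{dh(q) : h \in \Phi\}$; this is possible because the latter span has dimension at most $m$ and is lower semicontinuous in $q$, so after shrinking $U$ I may assume the dimension is locally constant and attained by finitely many $dh$'s.

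Since $U \subset K_\Phi$, the hypothesis forces $df_{i_1}\wedge\cdots\wedge df_{i_m} = 0$ at each $q \in U$ for every $m$-tuple drawn from $\Phi$, so the complex span of $df_1(q),\ldots, df_n(q)$ has dimension strictly less than $m$. By Corollary~\ref{dim-cor} applied to $f$, the submanifold $f(U) \subset \mathbb{C}^n$ carries a nontrivial complex tangent at every one of its points. Lemma~\ref{swarup-lemma} then produces an analytic disc $\sigma\colon\overline D\to\mathbb{C}^n$, continuous on $\overline D$ and holomorphic on $D$, with $\sigma(\partial D)\subset f(U)$; after passage to a suitable subdisc, $\sigma|_{\partial D}$ is injective. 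The composition $\tilde\sigma := f^{-1}\circ\sigma|_{\partial D}\colon \partial D\to U\subset K$ is a continuous injective map into ${\rm int}(K_\Phi)$.

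To lift $\tilde\sigma$ to an analytic disc in the maximal ideal space of $A$, I define for each $z \in D$ the linear functional
$$
\chi_z(g) = \frac{1}{2\pi i}\oint_{\partial D}\frac{g(\tilde\sigma(\zeta))}{\zeta-z}\,d\zeta,\qquad g \in A.
$$
This $\chi_z$ is bounded and unital, depends holomorphically on $z$, and satisfies $\chi_z(f_i) = \sigma_i(z)$ since $f_i\circ\tilde\sigma$ is the boundary value of the holomorphic function $\sigma_i$. If one can verify that $\chi_z$ is multiplicative for every $z \in D$, then setting $\hat\sigma(z) = \chi_z$ on $D$ and $\hat\sigma = \tilde\sigma$ on $\partial D$ gives a continuous injective map $\hat\sigma\colon\overline D \to \mathcal{M}(A)$ along which each $g \in A$ is holomorphic, which is precisely the analytic disc sought, with boundary in ${\rm int}(K_\Phi)$.

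Multiplicativity reduces, by density of polynomials in elements of $\Phi$, to the claim that for each $h \in \Phi$ the continuous function $h \circ \tilde\sigma$ on $\partial D$ extends to a function holomorphic on $D$ and continuous on $\overline D$. The choice of $f_1,\ldots,f_n$ in the first paragraph guarantees that $dh$ lies in the complex span of $df_1, \ldots, df_n$ at every point of $U$, so $h\circ f^{-1}$ is a CR function on $f(U)$. Following the method in the proof of \cite[Theorem~1.13]{izzo_kalm_wold}, I plan to extract the desired extension by applying Lemma~\ref{swarup-lemma} to the extended embedding $(f_1,\ldots,f_n,h)\colon U\to\mathbb{C}^{n+1}$, whose image still has complex tangent everywhere by the same dimension count; the resulting disc in $\mathbb{C}^{n+1}$ should, after appropriate identification, project to $\sigma$, and its $(n{+}1)$st coordinate then furnishes a holomorphic extension of $h\circ\tilde\sigma$ across $D$. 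Verifying that the Bishop disc on the extended image can be chosen to project precisely to the prescribed $\sigma$ — equivalently, establishing the compatibility of Bishop's construction under projection or the uniqueness of the relevant CR extension along $\sigma(D)$ — is the main technical obstacle that the argument must overcome.
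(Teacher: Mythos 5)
Your proposal does not prove the statement under review. Lemma~\ref{swarup-lemma} is a statement purely about $C^2$ submanifolds of $\mathbb{C}^n$: if $M\cap U$ is nonempty and consists entirely of complex-tangent points, then $M\cap U$ contains the boundary of an analytic disc. No uniform algebra appears in it. Your argument instead begins with a point $p\in{\rm int}(K_\Phi)$, a generating family $\Phi$, and an embedding built from functions in $\Phi$ --- that is the setting and, in outline, the content of Lemma~\ref{exceptional_set_empty_int}, a different lemma of the paper. Worse, in your second paragraph you write ``Lemma~\ref{swarup-lemma} then produces an analytic disc,'' i.e., you invoke the very statement you were asked to prove. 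As a proof of Lemma~\ref{swarup-lemma} the proposal is therefore circular and contains none of the required content: what is actually needed is Bishop's construction of families of analytic discs attached to a manifold near a complex tangent (the paper derives the lemma from \cite[Theorem~18.7]{Alex-Wermer}, with the refinement localizing the disc boundary inside $M\cap U$ carried out in \cite[Lemma~3.8]{ghosh}), and nothing in your write-up engages with Bishop's equation or any substitute for it.

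Even read charitably as an attempt at Lemma~\ref{exceptional_set_empty_int}, the argument is incomplete and diverges from the paper at the decisive step. You try to manufacture a multiplicative functional $\chi_z$ by a Cauchy integral over $\tilde\sigma(\partial D)$, and multiplicativity then forces you to extend each $h\circ\tilde\sigma$ holomorphically across $D$ --- the ``main technical obstacle'' you concede is unresolved (and the proposed fix, re-running the disc construction for the augmented embedding $(f_1,\ldots,f_n,h)$ into $\mathbb{C}^{n+1}$ and hoping it projects to the same disc, has no reason to work: Bishop's construction gives no such compatibility under projection). The paper avoids this entirely: it uses the Baouendi--Treves approximation theorem \cite[Ch.~13, Theorem~1]{boggess} to show every $g\in\Phi$ is a uniform limit of polynomials on $f(U)$, so that $\overline{A|N}$ is identified with the polynomial closure on $f(N)$; the boundary of the Bishop disc then lies in the polynomial hull of $f(N)$, hence the disc sits in the maximal ideal space of $\overline{A|N}\subset\mathcal{M}(A)$, with no need to extend individual functions across $D$ or to verify multiplicativity by hand. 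So the gap is twofold: the proposal proves the wrong statement while assuming the right one, and the part it does attempt omits the approximation-theoretic step that makes the argument close.
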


\begin{proof}[Proof of Lemma~\ref{exceptional_set_empty_int}]
Suppose that $K_\Phi$ has nonempty interior in $M$.  Let $r$ be the largest integer such that for some point $p$ in the interior of $K_\Phi$ in $M$ there exist $r$ functions $f_1,\ldots, f_r\in \Phi$ with $df_1\wedge\cdots\wedge df_r(p)\neq 0$.  Then $df_1\wedge\cdots\wedge df_r\neq 0$ throughout a neighborhood $\Omega$ of $p$ in ${\rm int}(K_{\Phi})$.

We claim that the differentials of the real and imaginary parts of the functions in $\Phi$ span the cotangent space to $M$ at some point $x$ in $\Omega$ (and hence at all points in some neighborhood of $x$).  To see this, assume that it is false, and let $k$ denote the maximum dimension of the spaces $\lspan \{du(y): u {\rm\ is\ the\ real\ or\ imaginary\ part\ of\ a\ function\ in\ } \Phi\}$ over all points $y\in \Omega$.  Fix a point $y\in \Omega$ where this maximum is achieved and choose functions $u_1,\ldots, u_k$, with each $u_j$ the real or imaginary part of a function $g_j$ in $\Phi$, such that $du_1\wedge \cdots \wedge du_k(y)\neq 0$.  Then in a neighborhood of $y$, the common level set of $g_1,\ldots, g_k$ passing through $y$ is a $C^1$-smooth manifold of positive dimension on which every function in $\Phi$ is constant, contrary to the fact that $\Phi$ separates points on $K$.  This establishes the claim.  Thus we can choose functions $f_{r+1},\ldots, f_n$ in $\Phi$ such that the differentials of the real and imaginary parts of $f_1,\ldots, f_n$ span the cotangent space to $M$ at all points in some neighborhood $U\subset\Omega$ of a point $x$ in $\Omega$.  Then the map $f=(f_1,\ldots, f_n)|U:U\rightarrow \mathbb{C}^n$ is an embedding.  

At every point of $U$ the dimension of the complex linear span of $df_1,\ldots, df_n$ is equal to $r<m$.  Consequently, $f(U)$ is a CR manifold with complex tangent space of dimension $m-r>0$ by Corollary~\ref{dim-cor}.  Note that by our choice of $f_1,\ldots, f_r$, we have that for every function $g\in \Phi$, the differential $dg(y)$ belongs to $\lspan\{df_1(y),\ldots, df_n(y)\}$ for every point $y\in U$.  It follows that regarding functions in $A$ as functions on $U$ (by precomposing with $f^{-1}$), every function in $\Phi$ is CR on $f(U)$.  Therefore, the approximation theorem of Salah Baouendi and Fran\c cois Treves \cite[Ch.~13, Theorem~1]{boggess} gives that, shrinking $U$ if necessary, each function $g\in \Phi$ can be approximated uniformly on $f(U)$ by polynomials.  
(In \cite{boggess} the approximation theorem is stated only for generic CR manifolds.  However, the theorem for nongeneric CR manifolds follows from the generic case.)
Thus for $N$ a closed neighborhood of $x$ contained in $U$, the restriction algebra $\overline{A|N}$ can be identified with the uniform closure of the polynomials on $f(N)$.  Since the complex tangent space to $f(U)$ is everywhere of dimension $m-r>0$, Lemma~\ref{swarup-lemma} gives that $f(N)$ contains the boundary of an analytic disc in $\mathbb{C}^n$.  Finally, since the maximal ideal space of $\overline{A|N}$ is contained in the maximal ideal space of $A$, the desired conclusion follows.
\end{proof}

The following result of the author \cite{izzo_approx_on_manifolds} will enable us to reduce approximation on a variety to approximation on the union of the exceptional set and the singular set of the variety.  This type of theorem has a long history, going back to work of John Wermer \cite{wermer64} and \cite{wermer} and Michael Freeman \cite{freeman} in the 1960's - for a detailed account, see \cite{izzo_approx_on_manifolds}.

\begin{theorem}[\cite{izzo_approx_on_manifolds},~Theorem~1.3] \label{approximation_theorem}
Let $A$ be a uniform algebra on a compact
Hausdorff space $X$, and suppose that the maximal ideal space of $A$ is $X$.
Suppose also that $E$ is a closed subset of $X$ such that $X \setminus E$
is an $m$-dimensional manifold and such that
\begin{enumerate}
\item for each point $p \in X \setminus E$ there are functions
$f_1, \ldots, f_m$ in $A$ that are $C^1$ on $X \setminus E$ and satisfy $df_1
\wedge \ldots \wedge df_m (p) \neq 0$, and
\item  the functions in $A$ that are $C^1$ on $X \setminus E$ separate points
on $X$.
  \end{enumerate}
Then $A= \{g \in C(X) : g | E \in A|E \}$.  \end{theorem}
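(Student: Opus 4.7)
The inclusion $A \subseteq \{g \in C(X) : g|E \in A|E\}$ is immediate, so the task is the reverse: given $g \in C(X)$ with $g|E \in A|E$, show $g \in A$. My plan is to combine a local totally-real approximation on $X \setminus E$ with a global argument that handles the behavior at $E$.

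For the local step, fix $p \in X \setminus E$. Hypothesis~(1) supplies $f_1,\ldots,f_m \in A$ that are $C^1$ near $p$ with $df_1 \wedge \cdots \wedge df_m(p) \neq 0$, so $f = (f_1,\ldots,f_m)$ is an embedding of a small neighborhood $U$ of $p$ in the $m$-manifold $X \setminus E$ into $\mathbb{C}^m$. By Corollary~\ref{dim-cor} applied with $r = m$, the complex tangent space of $f(U)$ has dimension $m - m = 0$, so $f(U)$ is totally real. The Baouendi--Treves approximation theorem \cite[Ch.~13, Theorem~1]{boggess}, already invoked in the proof of Lemma~\ref{exceptional_set_empty_int}, then shows that after possibly shrinking $U$ every continuous function on $f(\overline U)$ is a uniform limit of polynomials; pulling back through $f^{-1}$ yields $\overline{A|\overline U} = C(\overline U)$.

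For the global step, choose $h \in A$ with $h|E = g|E$ and replace $g$ by $g - h$, reducing to the case $g|E = 0$; continuity then provides an open neighborhood $W$ of $E$ on which $|g|$ is as small as desired. Cover the compact set $X \setminus W$ by finitely many neighborhoods $U_1,\ldots,U_N$ of the type just produced, on each of which $A$ is locally dense in the continuous functions. The genuine obstacle is that local membership in $\overline{A}$ on each piece does not automatically entail global membership in $A$; naive patching by a partition of unity yields only a continuous function, not an element of $A$. I would close the gap by duality: show that every measure $\mu$ on $X$ annihilating $A$ satisfies $\mu|_{X \setminus E} = 0$. This is done by testing $\mu$ against continuous functions compactly supported in a single $\overline{U_j}$ and exploiting the local density $\overline{A|\overline{U_j}} = C(\overline{U_j})$, with hypothesis~(2) ensuring that $A$ separates points well enough that such local test functions can be realized as restrictions of elements of $A$ with controlled behavior off $U_j$. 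Once $\mu$ is concentrated on $E$, the relation $g|E \in A|E$ forces $\int g\, d\mu = 0$, and the Hahn--Banach theorem gives $g \in A$.
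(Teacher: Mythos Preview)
The paper does not prove Theorem~\ref{approximation_theorem}; it is quoted verbatim from \cite{izzo_approx_on_manifolds} and used as a black box. So there is no in-paper proof to compare against.

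That said, your proposal has a genuine gap at exactly the point you flag as ``the genuine obstacle.'' You correctly observe that local density $\overline{A|\overline{U_j}} = C(\overline{U_j})$ does not by itself imply that an annihilating measure $\mu$ has no mass on $\overline{U_j}$: the approximants to a test function on $\overline{U_j}$ may be enormous on the rest of $X$, so $\int h\,d\mu$ is not controlled. Your proposed fix---that hypothesis~(2) ``ensures that $A$ separates points well enough that such local test functions can be realized as restrictions of elements of $A$ with controlled behavior off $U_j$''---is not an argument. Separation of points is far weaker than the existence of approximate cutoff functions in $A$; a uniform algebra can separate points and still contain nothing resembling a bump function (think of the disc algebra). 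This is precisely the hard part of the theorem, and nothing in your sketch addresses it.

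What the actual proof in \cite{izzo_approx_on_manifolds} exploits is stronger than Baouendi--Treves on a single chart: on a totally real $C^1$ submanifold one has approximation results (in the spirit of H\"ormander--Wermer and Harvey--Wells) that let one approximate a compactly supported smooth function on $f(U)$ by entire functions that are simultaneously small on any prescribed compact set disjoint from the support. Pulling back, one gets elements of $A$ that approximate a bump near $p$ and are small on $X\setminus U$; testing $\mu$ against these shows $|\mu|$ vanishes near $p$. Your local step is fine, but the global step needs this sharper approximation with off-support control, not just point separation.
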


The general idea of the proof of Theorem \ref{support_theorem} is to use Theorem \ref{approximation_theorem} to reduce approximation on a variety $V$ to approximation on successively smaller and smaller sets.  In \cite{anderson_izzo_varieties} and \cite{anderson_izzo_local} this was done by considering the support of an arbitrary annihilating measure.  As noted in the introduction, it seems that in the present setting dealing with an individual annihilating measure does not work and that we must, in effect, consider all the annihilating measures at once.  This will be done indirectly using the notion of essential set due to Bear \cite{bear}.  The {\it essential set} for a uniform
algebra $A$ on a space $X$ is the unique smallest closed subset $\essa$ of $X$ such
that $A$ contains every continuous function on $X$ that vanishes on
$\essa$.   For a proof of the existence of the essential set and other details, see \cite[pp. 144--147]{browder}.
Note that $A$ contains every continuous function whose
restriction to $\essa$ lies in the restriction of $A$ to $\essa$.
An alternative description of the essential set $\essa$ is that $\essa$ is the closure of the union of the supports of all the annihilating measures for $A$.  For every closed subset $Y$ of $X$ containing $\essa$, the algebra $A|Y$ obtained by restricting the functions in $A$ to $Y$ is closed in $C(Y)$.  Note that $A=C(X)$ if and only if $\essa$ is the empty set.  Note also that the conclusion of Theorem~\ref{approximation_theorem} can be rephrased as the statement that the essential set $\essa$ for $A$ is contained in $E$.

We will need the following two lemmas regarding the essential set.

\begin{lemma}\label{inheritance_lemma} Let $A$ be a uniform algebra on a compact Hausdorff space $X$ and suppose that the maximal ideal space of $A$ is $X$.  Let $Y$ be a closed subset
of $X$ that contains the essential set for $A$.   Then the maximal ideal space of  $A|Y$ is $Y$. 
\end{lemma}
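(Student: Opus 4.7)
The plan is to show that every nonzero multiplicative linear functional on $A|Y$ arises from point evaluation at a point of $Y$, by first lifting it to a character on $A$ and then using the essential set hypothesis to force the representing point into $Y$.

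First I would note that because $Y$ contains $\essa$, the restriction algebra $A|Y$ is closed in $C(Y)$ (as mentioned in the preceding discussion), so $A|Y$ is genuinely a uniform algebra on $Y$. Given an arbitrary character $\varphi$ on $A|Y$, I would define a linear functional $\widetilde\varphi$ on $A$ by $\widetilde\varphi(f) = \varphi(f|Y)$. The restriction map $f \mapsto f|Y$ is a continuous algebra homomorphism from $A$ into $A|Y$, so $\widetilde\varphi$ is a nonzero multiplicative linear functional on $A$. By the hypothesis that the maximal ideal space of $A$ is $X$, there exists $x \in X$ with $\widetilde\varphi(f) = f(x)$ for every $f \in A$.

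The key step is to show that $x$ actually lies in $Y$. Suppose, for contradiction, that $x \in X \setminus Y$. Since $X$ is compact Hausdorff and $Y$ is closed, Urysohn's lemma supplies a continuous function $g \in C(X)$ with $g(x) = 1$ and $g|Y \equiv 0$. In particular $g$ vanishes on $\essa \subset Y$, so by the defining property of the essential set, $g \in A$. But then
\[
1 = g(x) = \widetilde\varphi(g) = \varphi(g|Y) = \varphi(0) = 0,
\]
a contradiction. Therefore $x \in Y$.

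Finally, for every $h \in A|Y$, write $h = f|Y$ for some $f \in A$; then $\varphi(h) = \widetilde\varphi(f) = f(x) = h(x)$, so $\varphi$ is evaluation at $x \in Y$. The only conceptual obstacle is recognizing that the essential set hypothesis is precisely what is needed to produce, for any putative extraneous point $x \notin Y$, a separating function that lies in $A$; everything else is a routine character-extension argument.
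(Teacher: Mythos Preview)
Your proof is correct and follows essentially the same approach as the paper's: both arguments lift a character on $A|Y$ to one on $A$, represent it by a point $x\in X$, and then use the fact that $A$ contains every continuous function vanishing on $Y\supset\essa$ (via Urysohn's lemma) to force $x\in Y$. The paper abbreviates the lifting step by citing the characterization of the maximal ideal space of $A|Y$ as $\{p\in X:|f(p)|\le\|f\|_Y\text{ for all }f\in A\}$ from \cite[II.6]{gamelin}, but the underlying idea is identical.
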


\begin{proof}
This is well known and easily proved as follows: The maximal ideal space of 
$A|Y$ consists of those points $p$ of $X$ such that $|f(p)|\leq \|f\|_Y$ for every $f\in A$ (see \cite[II.6]{gamelin}), and because $A$ contains every continuous function on $X$ that vanishes on $Y$, the points of $Y$ are the only ones satisfying this condition.
\end{proof}

\begin{lemma}\label{essential}
Let $A$ be a uniform algebra on a compact Hausdorff space $X$, and suppose that the maximal ideal space of $A$ is $X$.  Then the essential set $\essa$ for $A$ has no isolated points.
\end{lemma}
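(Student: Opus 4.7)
The plan is to proceed by contradiction: if $p$ is an isolated point of $\essa$, I will show that $F := \essa \setminus \{p\}$ already satisfies the defining property of the essential set, violating the minimality built into the definition of $\essa$.

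First, since $p$ is isolated in $\essa$, there is an open neighborhood of $p$ in $X$ meeting $\essa$ only at $p$; hence $F$ is closed in $X$, and $\{p\}$ is a nonempty proper clopen subset of $\essa$. By Lemma \ref{inheritance_lemma}, the maximal ideal space of $A|\essa$ is $\essa$ itself. Shilov's idempotent theorem, applied to the commutative Banach algebra $A|\essa$, therefore furnishes an idempotent whose Gelfand transform is the characteristic function of $\{p\}$; lifting, I obtain a function $f \in A$ with $f(p) = 1$ and $f \equiv 0$ on $F$.

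Given any $g \in C(X)$ with $g|_F = 0$, the function $g - g(p) f$ then vanishes on all of $\essa$: at $p$ it equals $g(p) - g(p) = 0$, and on $F$ it equals $0 - 0 = 0$. By the defining property of $\essa$, $g - g(p) f \in A$, whence $g = \bigl(g - g(p) f\bigr) + g(p) f \in A$. Thus $A$ contains every continuous function on $X$ that vanishes on the proper closed subset $F$ of $\essa$, contradicting the minimality of $\essa$.

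No serious obstacle is anticipated: the only nontrivial ingredient is Shilov's idempotent theorem, and the hypothesis needed to apply it---that the maximal ideal space of $A|\essa$ equals $\essa$---is supplied by Lemma \ref{inheritance_lemma}. The standing assumption that the maximal ideal space of $A$ is $X$ enters the argument solely through that lemma.
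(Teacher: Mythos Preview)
Your proof is correct and follows essentially the same route as the paper's: both invoke Lemma~\ref{inheritance_lemma} to identify the maximal ideal space of $A|\essa$ with $\essa$, then apply the Shilov idempotent theorem to an isolated point $p$ to produce the characteristic function of $\{p\}$ in $A|\essa$, and conclude that $A$ already contains every continuous function vanishing on $\essa\setminus\{p\}$, contradicting minimality. Your version simply spells out the lifting and the algebraic verification that $g = (g - g(p)f) + g(p)f \in A$ more explicitly than the paper does. (One cosmetic remark: the word ``proper'' is unnecessary---if $\essa=\{p\}$ the idempotent is just the constant $1$ and the argument goes through unchanged.)
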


\begin{proof}
This follows easily from \cite[Theorem~6]{bear}, and is easily proved in a similar fashion as follows:
By the preceding lemma, the maximal ideal space of $A|\essa$ is $\essa$.  Consequently, if $\essa$ has an isolated point $p$, then the Shilov idempotent theorem \cite[Theorem~III.6.5]{gamelin} shows that $A|\essa$ contains every function on $\essa$ that vanishes identically on $\essa\setminus\{p\}$.  This contradicts the hypothesis that $\essa$ is the essential set for $A$.
\end{proof}

To prove Theorem \ref{support_theorem} we use Theorem \ref{approximation_theorem} to show that the essential set $\essa$ is contained in the union of the singular set of $V$ and the exceptional set of the algebra $A$.  As we have noted, the singular set of $V$ is locally contained in a proper subvariety of $V$, i.e., a variety of dimension strictly less than the dimension of $V$.  The exceptional set, in the absence of analytic discs, is also a proper subvariety of the regular set of $V$, by Lemma~\ref{exceptional_set_variety} combined with Lemma~\ref{exceptional_set_empty_int}.  One would like to then use induction to show that $\essa$ is contained in each of a sequence of varieties of decreasing dimension, until the dimension is zero (i.e., the variety is a discrete set), and then to conclude that $\essa$ is empty by invoking Lemma~\ref{essential}.  However, it is not obvious that the \emph{union} of the exceptional set and the singular set, even locally, must itself be contained in a subvariety of 
$V$ of dimension less than that of $V$.
To get around this difficulty, we proceed as in \cite{anderson_izzo_varieties} and \cite{anderson_izzo_local} treating the exceptional set and singular set separately, introducing a filtration of $V$ into exceptional sets and singular sets of decreasing dimensions.   We show by induction on decreasing dimension of the exceptional sets that $\essa$ must lie 
in the singular set.  We then use induction again on a decreasing sequence of singular sets to reduce $\essa$ to the empty set.


\section{Proof of Theorems~\ref{main_theorem} and~\ref{support_theorem}} \label{proof}

We first indicate how 
Theorem \ref{main_theorem} follows from
Theorem~\ref{support_theorem}.
Let the variety $V$, the compact set $K \subset V$, and the algebra $A$ be as in Theorem~\ref{main_theorem}.  Suppose also that $K$ contains no analytic discs.  Theorem~\ref{support_theorem} gives that the essential set $\essa$ for $A$ is contained in $\partial K$.  Then 
Lemma~\ref{inheritance_lemma} gives that the maximal ideal space of $A|\partial K$ is $\partial K$.  Note that the boundary of $\partial K$ relative to itself is empty.  Therefore, we can apply Theorem~\ref{support_theorem} with $V$ replaced by $\partial K$ and $K$ replaced by $\partial K$ also to conclude that the essential set for $A|\partial K$ is empty.  Consequently, $A=C(K)$, and Theorem~\ref{main_theorem} is established.

We now turn to the proof of Theorem \ref{support_theorem}, beginning with a general construction from \cite{anderson_izzo_varieties}.

Let $\Sigma$ be a real-analytic variety in the open set $W \subset \mathbb{R}^n$, and let $\Phi$ be a collection of functions real-analytic on $\Sigma$.  We define inductively subsets $\Sigma_{k}$ of $\Sigma$ such that $\Sigma_{0} = \Sigma$, and for $k \geq 1$, $\Sigma_{k}$ is a real-analytic subvariety of
\[ W_{k} := W \setminus \bigcup_{j=0}^{k-1} \; (\Sigma_{j})_{\mbox{\tiny sing}} \]
defined by
\[ \Sigma_{k} = (\Sigma_{k-1})_{\Phi}. \]
Note that by definition, $\Sigma_{k} \subset (\Sigma_{k-1})_{\mbox{\tiny reg}}$.
We will refer to the varieties $\Sigma_{k}$ as the {\em E-filtration of $\Sigma$ in $W$ with respect to $\Phi$}, and to the sets $(\Sigma_{k})_{\mbox{\tiny sing}}$ as the {\em S-filtration of $\Sigma$ in $W$} (\emph{E} for exceptional, \emph{S} for singular).

\begin{lemma} \label{dimension_lemma} With $V,\Omega,K, A, \Phi$ as in Theorem \ref{support_theorem}, suppose that $W$ is an open subset of $\Omega$  and $\Sigma \subset {\rm int}(K) \cap W$ is a real-analytic subvariety of $W$.  Let $\{ \Sigma_{k} \}$ be the E-filtration of $\Sigma$ in $W$ with respect to $\Phi$.  Then for each $k$, the dimension of $\Sigma_{k}$ is no more than $\mbox{dim} (\Sigma) - k$. \end{lemma}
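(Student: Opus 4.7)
The plan is induction on $k$. The base case $k=0$ is immediate since $\Sigma_0 = \Sigma$. For the inductive step, assuming $\dim(\Sigma_{k-1}) \le \dim(\Sigma) - (k-1)$, I will establish the strict inequality $\dim(\Sigma_k) < \dim(\Sigma_{k-1})$; combined with the induction hypothesis, this gives the desired bound $\dim(\Sigma_k) \le \dim(\Sigma) - k$. By Lemma~\ref{exceptional_set_variety}, $\Sigma_k = (\Sigma_{k-1})_\Phi$ is a real-analytic subvariety of the open set $W_k$, and by definition it is contained in $(\Sigma_{k-1})_{\mbox{\tiny reg}} \subset \Sigma_{k-1}$. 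The general fact about dimensions of subvarieties recalled at the start of Section~\ref{prelims} therefore reduces the strict inequality to showing that $\Sigma_k$ has empty interior relative to $\Sigma_{k-1}$, and ruling this out is where the no-analytic-disc hypothesis of Theorem~\ref{support_theorem} must enter.

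Suppose for contradiction that there is a nonempty open subset $U$ of $\Sigma_{k-1}$ contained in $\Sigma_k$, and pick $p \in U$. Since $U \subset \Sigma_k \subset (\Sigma_{k-1})_{\mbox{\tiny reg}}$ and the regular set is open in $\Sigma_{k-1}$, after shrinking $U$ we may assume that $U$ is an open subset of a real-analytic (hence $C^2$) manifold $M$ on which $(\Sigma_{k-1})_{\mbox{\tiny reg}}$ has constant dimension $m$ near $p$. Using $\Sigma \subset {\rm int}(K)$, choose a compact neighborhood $K'$ of $p$ in $M$ with $K' \subset U$ and with nonempty interior in $M$. Let $A' = \overline{A|K'}$, the uniform algebra on $K'$ obtained by restriction and closure; it is generated by the real-analytic, and in particular $C^2$, functions $\Phi|K'$. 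By the definition of $\Sigma_k = (\Sigma_{k-1})_\Phi$, every point of $K' \subset U$ lies in $M_\Phi$, so the exceptional set $K'_\Phi$ relative to $\Phi$ on $M$ equals all of $K'$ and hence has nonempty interior in $M$. Lemma~\ref{exceptional_set_empty_int} now produces an analytic disc in the maximal ideal space of $A'$. Because the restriction map $A \to A'$ has dense image, pulling back characters gives a continuous injection of the maximal ideal space of $A'$ into the maximal ideal space of $A$, which by hypothesis is $K$; the disc then becomes an analytic disc in $K$, contradicting the hypothesis of Theorem~\ref{support_theorem}.

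The chief technical point requiring care is the transfer of Lemma~\ref{exceptional_set_empty_int} from the global data $(A, K)$ to the localized data $(A', K')$, together with the verification that an analytic disc in the maximal ideal space of $A'$ descends to an analytic disc in $K$. Once this local reduction is in place, the induction goes through without further issue.
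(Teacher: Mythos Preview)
Your proof is correct and follows essentially the same route as the paper: induction on $k$, with the inductive step established by localizing to a compact piece of $(\Sigma_{k-1})_{\mbox{\tiny reg}}$ and invoking Lemma~\ref{exceptional_set_empty_int} (in contrapositive form) together with the no-analytic-disc hypothesis. You are in fact a bit more explicit than the paper about the transfer of the analytic disc from the maximal ideal space of the localized algebra $A'=\overline{A|K'}$ back into $K$, which the paper leaves implicit.
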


\begin{proof} Let $d = \mbox{dim}(\Sigma)$. The proof is by induction on $k$. The result is clear when $k = 0$.  Suppose we have shown for some $k$ that $\mbox{dim}(\Sigma_{k})\leq d - k$.  Fix $p \in (\Sigma_{k})_{\mbox{\tiny reg}}$, and let $U$ be a smoothly bounded
neighborhood of $p$ in $(\Sigma_{k})_{\mbox{\tiny reg}}$ with $\overline{U} \subset (\Sigma_{k})_{\mbox{\tiny reg}}$.  We may assume that
$U$ has constant dimension (by induction, no more than $d - k$) as a submanifold of
$\mathbb{R}^n$.  Applying
Lemma~\ref{exceptional_set_empty_int}, taking $M$ to be a neighborhood of $\overline U$ in $(\Sigma_{k})_{\mbox{\tiny reg}}$ and
replacing $A$ by $\overline{A|{\overline{U}}}$ yields that $\Sigma_{k+1} = (\Sigma_{k})_{\Phi}$ has no interior in $U$.  Since $p$ was arbitrary, we conclude that
\[ \mbox{dim}(\Sigma_{k+1}) \leq \mbox{dim}(\Sigma_{k}) - 1 \leq d - (k+1). \]
By induction, the proof is complete. \end{proof}

Note that Lemma \ref{dimension_lemma} implies, with $d = \mbox{dim}(\Sigma)$,  that $\Sigma_{d}$ is a zero-dimensional variety, i.e., a discrete set.

Let $B(p,r)$ denote the open ball of radius $r$ centered at $p \in \mathbb{C}^n$.

\begin{lemma} \label{support_in_S_filtration} With $V,\Omega,K, A, \Phi$ as in Theorem \ref{support_theorem}, assume $p \in {\rm int}(K)$.  If $r > 0$ is such that $V \cap B(p,r) \subset {\rm int}(K)$, and there is a real-analytic
$d$-dimensional subvariety $\Sigma \subset V$ of $B(p,r)$ with\/ $\essa \cap B(p,r) \subset \Sigma$, then\/ $\essa \cap B(p,r)$ is contained in the S--filtration of $\Sigma$, i.e., $\essa \cap B(p,r)\subset
\bigcup\limits_{k=0}^{d-1} (\Sigma_{k})_{\mbox{\rm\tiny sing}}$.
\end{lemma}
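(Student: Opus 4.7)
The plan is to prove by induction on $k$, for $0 \leq k \leq d$, that
\[ \essa \cap B(p,r) \subset \Sigma_k \cup \bigcup_{j=0}^{k-1}(\Sigma_j)_{\mbox{\tiny sing}}, \]
after which Lemma~\ref{essential} will eliminate the residual $\Sigma_d$. The base case $k = 0$ is precisely the hypothesis. For the inductive step from $k$ to $k+1$, the decomposition $\Sigma_k = (\Sigma_k)_{\mbox{\tiny sing}} \cup \Sigma_{k+1} \cup \bigl((\Sigma_k)_{\mbox{\tiny reg}} \setminus \Sigma_{k+1}\bigr)$ reduces matters to showing that every point $q \in \bigl((\Sigma_k)_{\mbox{\tiny reg}} \setminus \Sigma_{k+1}\bigr) \cap B(p,r)$ satisfies $q \notin \essa$.

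Fix such a $q$ and set
\[ Y = \Sigma_k \cup \bigcup_{j=0}^{k-1}(\Sigma_j)_{\mbox{\tiny sing}} \cup (K \setminus B(p,r)). \]
Take $W = B(p,r)$ as the ambient open set for the filtration. Since $W \setminus W_k = \bigcup_{j<k}(\Sigma_j)_{\mbox{\tiny sing}}$ is closed in $W$ and $\Sigma_k$ is closed in $W_k$, the union $\Sigma_k \cup (W \setminus W_k)$ is closed in $W$, so $Y$ is closed in $K$; by the inductive hypothesis $\essa \subset Y$, and Lemma~\ref{inheritance_lemma} gives that the maximal ideal space of $A|Y$ is $Y$. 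Let $m$ be the local dimension of $(\Sigma_k)_{\mbox{\tiny reg}}$ at $q$. Since $q \in (\Sigma_k)_{\mbox{\tiny reg}} \setminus (\Sigma_k)_{\Phi}$, there exist $f_1,\ldots,f_m \in \Phi$ with $df_1 \wedge \cdots \wedge df_m(q) \neq 0$; shrink to an open neighborhood $U_0$ of $q$ in $(\Sigma_k)_{\mbox{\tiny reg}}$, of constant dimension $m$, with $\overline{U_0}$ compact in $\bigl((\Sigma_k)_{\mbox{\tiny reg}} \setminus \Sigma_{k+1}\bigr) \cap B(p,r)$ and with the wedge of differentials nowhere vanishing on $U_0$. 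Because $\bigcup_{j<k}(\Sigma_j)_{\mbox{\tiny sing}}$ is disjoint from $W_k$ while $\Sigma_k \subset W_k$, one has $Y \cap W_k = \Sigma_k$, which is therefore open in $Y$; hence $U_0$, being open in $\Sigma_k$, is open in $Y$.

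Now apply Theorem~\ref{approximation_theorem} to $A|Y$ on $Y$ with closed subset $E = Y \setminus U_0$: hypothesis~(1) is supplied by $f_1|Y,\ldots,f_m|Y$, and hypothesis~(2) holds because $\Phi \subset A$ consists of real-analytic, point-separating functions. The conclusion gives that $A|Y$ contains every continuous function on $Y$ vanishing on $E$, so the essential set of $A|Y$ is contained in $E$. This upgrades to $\essa \subset E$: for any $g \in C(K)$ vanishing on $E$, the restriction $g|Y$ lies in $A|Y$, so there exists $\widetilde g \in A$ with $\widetilde g|Y = g|Y$; then $g - \widetilde g$ vanishes on $Y \supset \essa$ and hence belongs to $A$, forcing $g \in A$. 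Since $q \in U_0 \subset Y \setminus E$, we conclude $q \notin \essa$ and the induction advances.

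At $k = d$, Lemma~\ref{dimension_lemma} gives that $\Sigma_d$ is zero-dimensional, hence discrete in $W_d$. If some $q \in \essa \cap B(p,r) \cap \Sigma_d$ existed, Lemma~\ref{essential} would furnish a sequence $q_n \in \essa \setminus \{q\}$ with $q_n \to q$; for $n$ large $q_n \in B(p,r)$, and by discreteness $q_n \notin \Sigma_d$, so $q_n \in \bigcup_{j<d}(\Sigma_j)_{\mbox{\tiny sing}} = W \setminus W_d$. This set is closed in $W$, forcing $q \in W \setminus W_d$, in contradiction with $q \in \Sigma_d \subset W_d$. The main obstacle is the topological bookkeeping: choosing $Y$ so that it is closed in $K$, contains $\essa$, and carries $U_0$ as an open subset, together with the transfer of the essential-set conclusion from $A|Y$ back to $A$; once these points are in hand the induction proceeds cleanly.
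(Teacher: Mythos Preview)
Your proof is correct and follows essentially the same strategy as the paper: induct on the filtration index, and at each step use Theorem~\ref{approximation_theorem} together with Lemma~\ref{inheritance_lemma} to strip away the non-exceptional regular part $(\Sigma_k)_{\mbox{\tiny reg}}\setminus\Sigma_{k+1}$ from $\essa$, then invoke Lemma~\ref{essential} at the end to dispose of the discrete $\Sigma_d$.  The one stylistic difference is that the paper applies Theorem~\ref{approximation_theorem} once per induction step with the global choice $E=(K\setminus B(p,r))\cup\bigcup_{j\le k}(\Sigma_j)_{\mbox{\tiny sing}}\cup\Sigma_{k+1}$, removing all of $(\Sigma_k)_{\mbox{\tiny reg}}\setminus\Sigma_{k+1}$ in one shot, whereas you work pointwise, fixing $q$ and taking $E=Y\setminus U_0$ for a small local patch $U_0$; this forces you to carry the extra bookkeeping of checking that $U_0$ is open in $Y$ and of transferring the essential-set conclusion from $A|Y$ back to $A$, which you do correctly.
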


\begin{proof}
We will show by induction on $J$ that
\[ \essa \cap B(p,r) \subset \bigcup_{k=0}^{J} (\Sigma_{k})_{\mbox{\tiny sing}} \cup \Sigma_{J+1} \]
for each $J$, $0 \leq J \leq d-1$.
The desired conclusion then follows from Lemma~\ref{essential} since $\Sigma_d$ is a discrete set.

For the $J = 0$ case, let $X = (K \setminus B(p,r)) \cup \Sigma_{0} $ and let $E =(K \setminus B(p,r)) \cup (\Sigma_{0})_{\mbox{\tiny sing}} \cup \Sigma_{1}$.  Note that both $X$ and $E$ are closed.  We want to show that $\essa\subset E$.   
By hypothesis, $\essa\subset X$.  Therefore,
Lemma~\ref{inheritance_lemma} gives that the maximal ideal space of
${A|X}$ is $X$.  Note that $X \setminus E = \Sigma_{0} \setminus \bigl((\Sigma_{0})_{\mbox{\tiny sing}} \cup \Sigma_{1}\bigr)$ satisfies the hypotheses of Theorem~\ref{approximation_theorem}.  Therefore by Theorem~\ref{approximation_theorem}, if $g \in C(K)$ vanishes identically on $E$, then $g|X$ belongs to ${A|X}$.  Since by hypothesis $\essa\subset X$, we get that each $g\in C(K)$ vanishing identically on $E$ belongs to $A$.  Thus $\essa\subset E$, as desired.

The general induction step is similar: assuming the result for some $0\leq J < d-1$, we set
\[ X = (K \setminus B(p,r)) \cup \bigcup_{k=0}^{J} (\Sigma_{k})_{\mbox{\tiny sing}} \cup \Sigma_{J+1}, \]
\[ E = (K \setminus B(p,r)) \cup \bigcup_{k=0}^{J+1} (\Sigma_{k})_{\mbox{\tiny sing}} \cup \Sigma_{J+2}, \]
Both $X$ and $E$ are closed.  Noting that the induction hypothesis implies that $\essa \subset X$, we apply Theorem \ref{approximation_theorem} to ${A|X}$ as above, to conclude that $\essa \subset E$.  
\end{proof}

\begin{lemma} \label{support_in_boundary} With $V,\Omega,K, A, \Phi$ as in Theorem \ref{support_theorem}, assume $p \in {\rm int}(K)$.  Assume also that there exist $r > 0$ with $B(p,r) \cap V \subset {\rm int}(K)$ and a real-analytic subvariety $\Sigma \subset V$ of $B(p,r)$ with\/ $\essa \cap B(p,r)$ contained in the S--filtration of $\Sigma$ in $B(p,r)$.  Then there exists $r^{\prime} > 0$ such that\/ $\essa \cap B(p,r^{\prime}) = \emptyset$. \end{lemma}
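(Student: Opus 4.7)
The plan is to argue by induction on $d = \mbox{dim}(\Sigma)$, exploiting the dimension-reducing effect of each application of Lemma~\ref{support_in_S_filtration}. When $d = 0$ the $S$-filtration is vacuously empty, so $\essa \cap B(p,r) = \emptyset$; when $d = 1$, the $S$-filtration reduces to the locally discrete set $(\Sigma_{0})_{\mbox{\tiny sing}}$, and Lemma~\ref{essential} forbids $p$ from being an isolated point of $\essa$, so $p \notin \essa$ and closedness of $\essa$ supplies the desired $r'$.

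For the inductive step ($d \geq 2$), my aim is to produce a radius $r_1 \in (0, r]$ and a real-analytic subvariety $\Sigma' \subset V$ of $B(p, r_1)$ with $\mbox{dim}(\Sigma') \leq d - 1$ and $\essa \cap B(p, r_1) \subset \Sigma'$. Once such a $\Sigma'$ is in hand, an application of Lemma~\ref{support_in_S_filtration} (with $\Sigma'$ replacing $\Sigma$ and $r_1$ replacing $r$) gives that $\essa \cap B(p, r_1)$ is contained in the $S$-filtration of $\Sigma'$, and the inductive hypothesis then yields $r' > 0$ with $\essa \cap B(p, r') = \emptyset$. To construct $\Sigma'$, I would start from a real-analytic subvariety $Y_0$ of some ball $B(p, r_0)$ that contains $(\Sigma_0)_{\mbox{\tiny sing}}$ and satisfies $\mbox{dim}(Y_0) \leq d - 1$ (available by the standard local containment of the singular locus in a proper subvariety of $\Sigma$), and then successively adjoin, for each $k \geq 1$, a real-analytic subvariety of a yet smaller ball around $p$ that locally covers $\Sigma_k$ (which has $\mbox{dim}(\Sigma_k) \leq d - k \leq d - 1$ by Lemma~\ref{dimension_lemma}) and has dimension at most $d - 1$. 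Since the filtration is finite, only finitely many shrinkings of the ball are needed.

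The main obstacle is the extension step: each $\Sigma_k$ with $k \geq 1$ is a priori a real-analytic subvariety only of $W_k = B(p, r) \setminus \bigcup_{j < k}(\Sigma_{j})_{\mbox{\tiny sing}}$, and its closure in a smaller ball need not absorb the removed lower-dimensional strata cleanly nor form a real-analytic subvariety. Extending $\Sigma_k$, intersected with a small ball around $p$, across the earlier singular sets to a genuine real-analytic subvariety of dimension $\leq d - 1$ is the delicate technical point; I expect this to rely on the ambient inclusion $\Sigma_k \subset \Sigma_0$, together with the fact that near a regular point of $\Sigma_0$ the defining conditions for $\Sigma_k$ are real-analytic and extend across the strata already absorbed into the earlier pieces of $\Sigma'$.
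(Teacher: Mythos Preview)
Your plan has a genuine gap precisely where you flag it: the construction of a single real-analytic subvariety $\Sigma'$ of some $B(p,r_1)$ with $\dim \Sigma' \le d-1$ that absorbs all of $\bigcup_{k}(\Sigma_k)_{\mbox{\tiny sing}}$. The difficulty is not merely technical bookkeeping. Each $\Sigma_k$ ($k\ge 1$) is defined as $(\Sigma_{k-1})_\Phi$, which makes sense only on the regular part of $\Sigma_{k-1}$; in particular $\Sigma_k$ is a subvariety of $W_k$, not of $B(p,r)$. If $p$ itself lies in $(\Sigma_0)_{\mbox{\tiny sing}}$ --- which is exactly the interesting case --- then you are never ``near a regular point of $\Sigma_0$,'' and there is no evident real-analytic extension of the exceptional-set equations across $(\Sigma_0)_{\mbox{\tiny sing}}$. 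The closure of $\Sigma_k$ in $B(p,r_1)$ need not be real-analytic, and taking a union with $Y_0$ does not repair this. So the step ``successively adjoin \ldots\ a real-analytic subvariety that locally covers $\Sigma_k$'' is not justified, and I do not see a way to carry it out in general.

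The paper sidesteps this entirely by \emph{not} trying to build a single $\Sigma'$. Instead it runs a second, inner induction (on $J$) that peels the singular sets off from the top down: first show $\essa \cap (\Sigma_{d-1})_{\mbox{\tiny sing}} = \emptyset$, then $\essa \cap (\Sigma_{d-2})_{\mbox{\tiny sing}} = \emptyset$, and so on, until only $(\Sigma_0)_{\mbox{\tiny sing}}$ remains. The point is that for any $q \in (\Sigma_{d-1-J})_{\mbox{\tiny sing}}$, by the very construction of the filtration there is a small ball $B(q,s)\subset B(p,r)$ in which all the \emph{earlier} singular sets $(\Sigma_k)_{\mbox{\tiny sing}}$, $k<d-1-J$, are absent. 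Hence near $q$ the essential set already sits inside $(\Sigma_{d-1-J})_{\mbox{\tiny sing}}$ alone, which is locally contained in a genuine real-analytic subvariety $Y$ of $B(q,s)$ with $\dim Y < \dim \Sigma_{d-1-J} \le d-1$. Now Lemma~\ref{support_in_S_filtration} and the outer induction on dimension apply at $q$. This local argument, run at every $q$, removes the whole stratum; the final step (only $(\Sigma_0)_{\mbox{\tiny sing}}$ left) is handled at $p$ itself in the same way. The device of working at points $q$ where the earlier strata are automatically absent is what replaces your missing extension step.
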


\begin{proof} We apply induction on the dimension of $\Sigma$.  If $\mbox{dim}(\Sigma) = 0$, then $\Sigma$ is discrete, and Lemma~\ref{essential} shows that $|\mu|(\Sigma)=0$.  Now suppose the conclusion of the Lemma holds whenever $\mbox{dim}(\Sigma) < d$.  If $\mbox{dim}(\Sigma) = d$, let $\Sigma_{0}, \ldots, \Sigma_{d}$ be the \emph{E}-filtration of $\Sigma$ in $B(p,r)$.  (Recall that $\Sigma_{d}$ is discrete.) By induction on $J$ we will show that
\begin{equation} \essa \cap B(p,r) \subset \bigcup_{k=0}^{d-1-J} (\Sigma_{k})_{\mbox{\tiny sing}}. \label{eq:supp} \end{equation}
for $J = 0, \ldots , d-1$.  The case $J = 0$ is the hypothesis of the Lemma.  Assume we have established (\ref{eq:supp}) for some $J$, $0 \leq J < d-1$.
To show that (\ref{eq:supp}) holds with $J$ replaced by $J + 1$, we must show that $\essa \cap (\Sigma_{d-1-J})_{\mbox{\tiny sing}} = \emptyset$.  Fix $q \in (\Sigma_{d-1-J})_{\mbox{\tiny sing}}$.  By construction of the $\Sigma_{k}$, there exists $s > 0$ so that $B(q,s) \subset B(p,r)$ and $(\Sigma_{k})_{\mbox{\tiny sing}} \cap B(q,s) = \emptyset$ for all $k < d-1-J$. Therefore, the induction hypothesis implies that $\essa \cap B(q,s) \subset (\Sigma_{d-1-J})_{\mbox{\tiny sing}}$.  Replacing $s$ by a smaller positive number if necessary, we may assume that there is a real-analytic subvariety $Y$ of $B(q,s)$ with $(\Sigma_{d-1-J})_{\mbox{\tiny sing}} \subset Y \subset V$ and $\mbox{dim}(Y) < \mbox{dim}(\Sigma_{d-1-J}) \leq J + 1 < d$ (the next-to-last inequality following from Lemma \ref{dimension_lemma}).  By Lemma \ref{support_in_S_filtration}, $\essa \cap B(q,s)$ is contained in the \emph{S}-filtration of $Y$ in $B(q,s)$.  Now note that our induction hypothesis on dimension implies that the conclusion of Lemma \ref{support_in_boundary} holds with $\Sigma$ replaced by $Y$, since $\mbox{dim}(Y) < d$.  We conclude that there exists $s'>0$ such that $\essa \cap B(q,s') = \emptyset$.  Since $q  \in (\Sigma_{d-1-J})_{\mbox{\tiny sing}}$ was arbitrary, this shows that $\essa \cap (\Sigma_{d-1-J})_{\mbox{\tiny sing}} = \emptyset$ and completes the proof that (\ref{eq:supp}) holds for $J = 0, \ldots , d-1$.

Finally, the case $J = d-1$ of (\ref{eq:supp}) asserts that $\essa \cap B(p,r) \subset (\Sigma_{0})_{\mbox{\tiny sing}}$.   We may choose $t$ with $0 < t< r$ and a subvariety $Y \subset V$ of $B(p,t)$ so that $(\Sigma_{0})_{\mbox{\tiny sing}} \cap B(p,t)\subset Y$ and $\mbox{dim}(Y) < \mbox{dim}(\Sigma) = d$.  By Lemma \ref{support_in_S_filtration}, $\essa \cap B(p,t)$ is contained in the \emph{S}-filtration of $Y$ in $B(p,t)$.  Again applying our induction hypothesis on dimension, we conclude that there exists $r'>0$ such that $\essa \cap B(p,r') = \emptyset$.  This completes the proof.  \end{proof}

We can now finish the proof of Theorem \ref{support_theorem}.   Given $p \in {\rm int}(K)$ choose $r > 0$ such that $B(p,r) \cap V \subset \mbox{int}(K)$.  Taking $\Sigma = V \cap B(p,r)$ in Lemma \ref{support_in_S_filtration}, we see that $\essa \cap B(p,r)$ is contained in the \emph{S}--filtration of $V \cap B(p,r)$.  Then by Lemma \ref{support_in_boundary}, there is an $r'>0$ such that the support of $\essa$ is disjoint from $B(p,r')$.  We conclude that $\essa$ is contained in $\partial K$, as desired.


\begin{thebibliography}{99}

\bibitem{Alex-Wermer} H. Alexander and J. Wermer, {\it
Several Complex Variables and Banach Algebras}, 3rd~ed.,
Springer, New York, 1998.

\bibitem{anderson_izzo_two_manifolds} J. T. Anderson and A. J. Izzo, ``A peak point theorem for uniform algebras generated by smooth functions on a two-manifold,'' \emph{Bull.\ Lond.\ Math.\ Soc.\ }{\bf 33} (2001), 187--193.

\bibitem{anderson_izzo_smooth_manifolds} J. T. Anderson and A. J. Izzo, ``Peak point theorems for uniform algebras on smooth manifolds,'' \emph{Math. Z.} {\bf 261} (2009), 65--71.

\bibitem{anderson_izzo_varieties} J. T. Anderson and A. J. Izzo, \lq\lq A peak point theorem for uniform algebras on real-analytic varieties,\rq\rq\ {\it Math.\ Annalen\/} 364 (2016), 657--665. 

\bibitem{anderson_izzo_local} J. T. Anderson and A. J. Izzo, \lq\lq Localization for uniform algebras generated by real-analytic functions,\rq\rq\ \emph{Proc.\ Amer.\ Math.\ Soc.\ } (accepted). 

\bibitem{anderson_izzo_wermer_three_dimensions} J. T. Anderson, A. Izzo, and J. Wermer, ``Polynomial approximation on three-dimensional real-analytic submanifolds of $\mathbb{C}^n$,'' \emph{Proc.\ Amer.\ Math.\ Soc.\ }{\bf 129} (2001) no. 8, 2395--2402.

\bibitem{anderson_izzo_wermer_varieties} J. T. Anderson, A. J. Izzo, and J. Wermer, ``Polynomial approximation on real-analytic varieties in $\mathbb{C}^n$,'' \emph{Proc. Amer. Math. Soc. }{\bf 132} (2003) no. 5, 1495--1500.

\bibitem{basener} R. Basener, ``On rationally convex hulls,'' \emph{Trans. Amer. Math. Soc.} {\bf 182} (1973), 353--381.

\bibitem{bear} H. S. Bear,
\lq\lq Complex function algebras,\rq\rq\ \emph{Trans.\ Amer.\ Math.\ Soc.}  {\bf 90} (1959), 383--393.

\bibitem{boggess} A. Boggess, {\it CR Manifolds and the Tangential Cauchy-Riemann Complex\/}, 
Studies in Advanced Mathematics. CRC Press, Boca Raton, FL, 1991.


\bibitem{browder} A. Browder, \emph{Introduction to Function Algebras}, Benjamin, New York (1969).

\bibitem{Co} B. J. Cole, {\it One-Point Parts and the Peak Point Conjecture\/}, Ph.D. dissertation, Yale Univ., 1968.

\bibitem{diederich-fornaess} K. Diederich and J. Fornaess, ``Pseudoconvex domains with real-analytic boundary,'' \emph{Ann.\ of Math.} (2) {\bf 107} (1978), no. 2, 371--384.

\bibitem{freeman} M. Freeman, ``Some conditions for uniform approximation on a manifold,'' \emph{Function Algebras} (ed. F. Birtel, Scott, Foresman and Co., Chicago, 1966), 42--60.

\bibitem{gamelin} T. W. Gamelin, 
{\it Uniform Algebras}, 2nd ed., Chelsea Publishing Company, New York, NY, 1984.

\bibitem{ghosh} S. N. Ghosh, ``Isolated point theorems for uniform algebras on two- and three-manifolds,'' \emph{Proc.\ Amer.\ Math.\ Soc.} {\bf 144} (2016), 3921--3933.

\bibitem{ghosh2} S. N. Ghosh, \lq\lq An isolated point theorem for uniform algebras on three-manifolds" (in preparation).


\bibitem{izzo_survey} A. J. Izzo, ``The peak point conjecture and uniform algebras invariant under group actions,'' \emph{Function Spaces in Modern Analysis}, Contemp. Math. {\bf 547}, Amer. Math. Soc. 2011, 135--145.

\bibitem{izzo_approx_on_manifolds} A. J. Izzo, ``Uniform approximation on manifolds,'' \emph{Ann.\ of Math.} (2) {\bf 174} (2011), no.~1, 55 -- 73.

\bibitem{izzo_kalm_wold} A. J. Izzo, H. Samuelsson Kalm, and E. F. Wold, 
\lq\lq Presence or absence of analytic structure in maximal ideal spaces,"
\emph{Math.\ Ann.} {\bf 366} (2016), 459--478.

\bibitem{narasimhan} R. Narasimhan, \emph{Introduction to the Theory of Analytic Spaces}, Lecture Notes in Mathematics no. 25, Springer-Verlag, Berlin, 1966.

\bibitem{stout_book} E. L. Stout, \emph{The Theory of Uniform Algebras}, Bogden and Quigley, (1971).

\bibitem{stout_varieties} E. L. Stout, ``Holomorphic approximation on compact, holomorphically convex, real-analytic varieties,'' \emph{Proc. Amer. Math. Soc.}, {\bf 134} (2006), no. 8, 2303--2308.


\bibitem{wermer64}  J. Wermer, \lq\lq Approximation on a disk,"
{\em Math.\ Ann.\ }{\bf 155}
(1964), 331--333.

\bibitem{wermer} J. Wermer, ``Polynomially convex disks,'' \emph{Math.\ Ann.} {\bf 158} (1965), 6--10.
\end{thebibliography}
\end{document}